\NeedsTeXFormat{LaTeX2e}
\documentclass{amsart}
\usepackage{amssymb}
\usepackage{amscd}
\usepackage[latin1]{inputenc}
\usepackage{amsmath}
\usepackage{amsfonts}
\usepackage{epsfig}
\usepackage{latexsym}
\usepackage{color}
\newtheorem{theorem}{Theorem}[section]
\newtheorem{corollary}[theorem]{Corollary}
\newtheorem{lemma}[theorem]{Lemma}
\newtheorem{definition}[theorem]{Definition}

\newtheorem{proposition}[theorem]{Proposition}

\newtheorem{remark}[theorem]{Remark}

\newtheoremstyle{nonum}{}{}{\upshape}{}{\itshape}{.}{ }{#1#3}
\theoremstyle{nonum}

\newcommand{\comment}[1]{}
\def \beq {\begin{eqnarray}}
\def \eeq {\end{eqnarray}}
\def \beqn {\begin{eqnarray*}}
\def \eeqn {\end{eqnarray*}}

\def\A{{\mathcal A }}

\def\P{{\mathcal P }}
\def\zz{{\mathbb{Z}}}
\def\mZ{{\mathbb{Z}}}
\def\mR{{\mathbb{R}}}
\def\barM{{\overline{M}}}
\def\b0{{\bf{0}}}

\title[Computing bounds for entropy of stationary $\mZ^d$ MRFs]{Computing bounds for entropy of stationary $\mZ^d$ Markov random fields}

\date{}
\author{Brian Marcus}
\address{Brian Marcus\\
Department of Mathematics\\
University of British Columbia}
\email{marcus@math.ubc.ca}
\author{Ronnie Pavlov}
\address{Ronnie Pavlov\\
Department of Mathematics\\
University of Denver}
\email{rpavlov@du.edu}
\keywords{Markov random fields; Gibbs measures; entropy;
disagreement percolation}

\subjclass[2000]{Primary: 37D35, 37B50; Secondary: 37B10, 37B40}


\begin{document}



\begin{abstract}
For any stationary $\mZ^d$ Gibbs measure that satisfies strong
spatial mixing, we obtain sequences of upper and lower
approximations that converge to its entropy. In the case $d=2$,
these approximations are efficient in the sense that they
are accurate to within $\epsilon$ and can be computed
in time polynomial in $1/\epsilon$.
\end{abstract}

%

\maketitle

\section{Introduction}

The entropy of a stationary $\mZ^d$ Markov random field (MRF) is
notoriously difficult to compute. Recently, Gamarnik and
Katz~\cite{GK} developed a technique for estimating entropy, and
more generally pressure, for certain MRF's.  Their approach built on
earlier work of Weitz~\cite{Weitz} who gave an algorithm for
efficiently counting the number of independent sets in finite
graphs. The algorithm was based on the construction of a computation
tree and the proof of efficiency relied on the concept of strong
spatial mixing (SSM)~\cite[Part 2, Section 2]{Martin}. Coming from
the direction of ergodic theory, we showed that a variant of the
transfer matrix method provides efficient algorithms for estimating
entropy for certain $\mZ^2$ MRF's \cite{pavlov},~\cite{MP}. Our argument
relied on a version of SSM implied by a disagreement percolation
condition developed in~\cite{vdbM} (see Proposition~\ref{vdBM-bound} below).
We regard an algorithm
as ``efficient'' if it computes upper and lower bounds
accurate to within $\epsilon$ in time polynomial in $1/\epsilon$.

While both approaches made use of SSM, they both required other
assumptions as well,
some involving the existence of certain kinds of periodic
configurations.
The purpose of this paper is to give approximations, using only SSM
as a hypothesis, which estimate the entropy of $\mZ^d$ MRF's (and do
so efficiently in the case $d=2$). General sufficient conditions for
SSM can be found in the literature, e.g.,~\cite{dob} and~\cite{vdbM}.

Assuming a standard version of SSM (at exponential rate), we obtain
upper and lower bounds that are exponentially tight (see
Lemma~\ref{xxx} and Theorem~\ref{approx}).
While these bounds are not explicitly computable in all cases,
we believe them to be of independent interest.
In the special case of a nearest-neighbor stationary
$\mZ^d$ Gibbs measure which satisfies SSM, we obtain an algorithm
that approximates these bounds (Theorem~\ref{approx2}). Combining
these results yields an algorithm for approximating entropy that is accurate to
within $\epsilon$ in time polynomial in
$e^{O((\log(1/\epsilon))^{(d-1)^2})}$  (see Corollary~\ref{cor1}).
Specializing to $d=2$, the algorithm runs in time polynomial in
$1/\epsilon$.
We also show how to modify the algorithm to approximate the pressure
of the interaction that defines the Gibbs measure.

%


\section{Background}

We focus on Markov random fields on the
{\bf $d$-dimensional cubic lattice}, the graph 
defined by vertex set $\zz^d $ and edge set $\{\{u,v\} \ :
\sum_{i=1}^d |u_i - v_i| = 1\}$. The {\bf boundary} of a set $S$,
which is denoted by $\partial S$, is the set of $v \in \zz^d
\setminus S$ which are adjacent to some element of $S$. 
%

An {\bf alphabet} $\A$ is a finite set with at least two elements.
For a non-empty subset $S \subset \zz^d$, an element
$u \in \A^S$
is called a {\bf configuration}; here, $S$ is called the {\bf shape}
of $u$. For any configuration $u$ with shape $S$ and any $T
\subseteq S$, denote by $u|_T$ the restriction of $u$ to $T$, i.e.
the sub-configuration of $u$ occupying $T$. For $S,T$ disjoint sets,
$x \in \A^S$ and $y \in \A^T$, $xy$ denotes the configuration on $S
\cup T$ defined by $(xy)|_S = x$ and $(xy)|_T = y$, which we call
the {\bf concatenation} of $x$ and $y$.
We will sometimes informally identify a configuration $x$ on a shape
$S$ with the corresponding configuration on a translate $S+v$,
namely the configuration $y$ on $S + v$ defined by $y_u = x_{u -
v}$.

We use $\sigma$ to denote the {\bf $\mathbb{Z}^d$ shift action} on $\A^{\zz^d}$
defined by $(\sigma_{v}(x))_u = x_{u+v}$. The set $\A^{\zz^d}$ is a
topological space when endowed with the product topology (where $\A$
has the discrete topology), and any subset inherits the induced
topology. By a {\bf $\zz^d$-measure}, we mean a Borel probability
measure on $\A^{\zz^d}$. This means that any $\mu$ is determined by
its values on the sets $[w] := \{x \in \A^{\zz^d} \ : \ x|_S = w\}$,
where $w$ is a configuration with arbitrary finite shape $S
\subseteq \zz^d$. Such sets are called {\bf cylinder sets}, and for
notational convenience, rather than referring to a cylinder set
$[w]$ within a measure or conditional measure, we just use the
configuration $w$. For instance, $\mu(w , v \ | \ u)$ represents the
conditional measure $\mu([w] \cap [v] \ | \ [u])$.  A
$\zz^d$-measure $\mu$ is {\bf translation-invariant} (or {\bf
stationary}) if $\mu(A) = \mu(\sigma_{v} A)$ for all measurable sets
$A$ and $v \in \zz^d$. A $\zz^d$-measure is {\bf fully supported} if
it assigns strictly positive measure to every cylinder set
in $\A^{\zz^d}$.


\begin{definition}
A $\zz^d$-measure $\mu$ is a {\bf $\zz^d$ Markov random field} (or
MRF) if, for any finite $S \subset \zz^d$, any $\eta \in \A^S$, any
finite $T \subset \zz^d$ s.t. $\partial S \subseteq T \subseteq
\zz^d \setminus S$, and any $\delta \in \A^T$ with $\mu(\delta)
> 0$,
\begin{equation}\label{MRFdefn}
\mu(\eta \ | \ \delta|_{\partial S}) = \mu(\eta \ | \ \delta).
\end{equation}
\end{definition}

Informally, $\mu$ is an MRF if, for any finite $S \subset \zz^d$,
configurations on the sites in $S$ and configurations on the sites
in $\zz^d \setminus (S \cup
\partial S)$ are $\mu$-conditionally independent given a configuration on the sites
in $\partial S$. In many papers, the MRF condition is defined in
terms of a parameter $r$, and the set of all sites in $\mathbb{Z}^d \setminus S$ that are
within distance $r$ of $S$ plays the role of $\partial S$. Obviously
our definition corresponds to the case $r=1$
(a ``nearest-neighbor'' MRF).

Another commonly used variant on our definition of MRF involves
conditioning, in the right-hand side of (\ref{MRFdefn}), on an
entire configuration on $\zz^d \setminus S$ a.e. rather than
arbitrarily large finite configurations. However, the definitions
are equivalent (one can just take weak limits) and the finite
approach is a bit more concrete.

For two configurations $y,z \in \A^{T}$ on a finite set $T$, let
$D(y,z) = \{v \in \zz^d:~ y_v \ne z_v\}$. Let $d(\cdot, \cdot)$
denote the $L^1$ distance on $\mZ^d$.

\begin{definition}
A stationary $\mZ^d$ MRF $\mu$ satisfies {\bf strong spatial mixing}
(SSM) if there exist constants $C, \alpha > 0$, such that for any
finite $V \subset \mZ^d$, $u \in V$, $\partial V \subseteq T \subset
V \cup \partial V$, $x \in \A^{\{u\}}$,
and $y, z \in \A^T$   satisfying $\mu(y), \mu(z) > 0$,
$$
\big|\mu(x \ | \ y) - \mu(x \ | \ z)\big| \le Ce^{-\alpha d(\{u\},
D(y,z))}.
$$
\end{definition}

We note that strong spatial mixing can be defined for probability
measures on fairly arbitrary undirected graphs.
Sometimes strong spatial mixing, as we have defined it, is called
``strong spatial mixing with exponential rate.''




The following is the standard notion, in ergodic theory and
information theory, of entropy.

\begin{definition}
Given a $\zz^d$-measure $\mu$ and a finite set $S \subset \zz^d$,
one defines the {\bf entropy} of $\mu$ on $S$ as:
$$
H_\mu(S) = \sum_{w \in \A^S} -  \mu(w) \log(\mu(w))
$$
where terms with $\mu(w) = 0$ are omitted. 
\end{definition}

We also have the notion of conditional entropy.
\begin{definition}\label{condent}
Given a $\zz^d$-measure $\mu$ and disjoint finite sets $S, T \subset
\zz^d$, one defines the {\bf conditional entropy} of $\mu$ on $S$,
given $T$, as:
$$
H_\mu(S \ | \ T) = \sum_{w \in \A^{S\cup T}:~ \mu(w|_T) > 0} -
\mu(w) \log\left(\frac{\mu(w)}{\mu(w|_T)}\right)
$$
where again terms with $\mu(w) = 0$ are omitted. 
\end{definition}

Let $\mu$ be a stationary $\zz^d$-measure. The following
monotonicity property is well known: if $S, T, T' \subset \zz^d$ are
finite, $T' \subset T$ and $S \cap T = \varnothing$, then $H_\mu(S \
| \ T) \le H_\mu(S \ | \ T')$. We can now extend
Definition~\ref{condent} to infinite $T$ by defining
$$
H_\mu(S \ | \ T) = \lim_n H_\mu(S \ | \ T_n)
$$
for a nested sequence of finite sets $T_1 \subset T_2 \subset
\ldots$ with $\cup_n T_n = T$; by the monotonicity property just
mentioned, the limit exists and does not depend on the particular
choice of sequence $T_n$. With this definition, it is clear that the
previously mentioned monotonicity also holds for infinite $T$ and
$T'$:

\begin{lemma}
\label{bounds} Let $\mu$ be a stationary $\zz^d$-measure. If $S, T,
T' \subset \zz^d$, $S$ is finite, $T' \subset T$ and $S \cap T =
\varnothing$, then
\begin{equation*}
 H_\mu(S \ | \ T) \le  H_\mu(S \ | \ T').
\end{equation*}
\end{lemma}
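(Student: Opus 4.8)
The plan is to reduce everything to the finite-conditioning case, which is furnished as a known monotonicity property, by exploiting the two facts established in the paragraph immediately preceding the statement: for infinite $T$ the quantity $H_\mu(S \mid T)$ is defined as the limit of $H_\mu(S \mid T_n)$ over any nested finite exhaustion $T_1 \subset T_2 \subset \cdots$ with $\cup_n T_n = T$, and this limit exists (the sequence is decreasing by finite monotonicity and bounded below by $0$) and is independent of the chosen exhaustion. That independence is the crucial flexibility, since it lets me pick exhaustions that line up conveniently with the finite-set monotonicity.

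First I would handle the case in which the smaller set $T'$ is finite (allowing $T$ to be infinite). Here I would choose a nested sequence of finite sets $T_1 \subset T_2 \subset \cdots$ with $\cup_n T_n = T$ and, crucially, $T' \subseteq T_1$; this is possible precisely because $T'$ is finite and contained in $T$. Since each $T_n \subseteq T$ and $S \cap T = \varnothing$, every $T_n$ is disjoint from $S$, and since $T' \subseteq T_n$, the given finite monotonicity yields $H_\mu(S \mid T_n) \le H_\mu(S \mid T')$ for all $n$. Letting $n \to \infty$ and invoking $H_\mu(S \mid T) = \lim_n H_\mu(S \mid T_n)$ gives $H_\mu(S \mid T) \le H_\mu(S \mid T')$. (When $T$ is itself finite the exhaustion is eventually constant and this is just the quoted finite monotonicity.)

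Next I would bootstrap to arbitrary, possibly infinite, $T'$. I would exhaust $T'$ by a nested sequence of finite sets $T'_1 \subset T'_2 \subset \cdots$ with $\cup_n T'_n = T'$. Each $T'_n$ is a finite subset of $T$ disjoint from $S$, so the case already proved (with $T'_n$ playing the role of the finite smaller set) gives $H_\mu(S \mid T) \le H_\mu(S \mid T'_n)$ for every $n$. Taking $n \to \infty$ and using $H_\mu(S \mid T') = \lim_n H_\mu(S \mid T'_n)$ finishes the argument.

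I do not expect any genuine analytic obstacle here: the only point requiring care is the justification of each passage to the limit, and that is exactly what the preceding discussion supplies, namely convergence of the monotone sequences and independence of the limits from the exhaustions. The real content of the proof is therefore just the bookkeeping of choosing the finite sets so that the supplied finite-conditioning monotonicity applies at each stage, first with $T'$ finite and then with $T'$ exhausted from within.
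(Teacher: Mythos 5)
Your argument is correct and is precisely the routine limit argument the paper itself has in mind: the paper states this lemma without proof, asserting it is ``clear'' from the finite-set monotonicity together with the definition of $H_\mu(S \mid T)$ as a limit over nested finite exhaustions. Your two-step reduction (first $T'$ finite, then exhausting $T'$) fills in exactly that intended argument with no gaps.
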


We will find the following notation useful later. Let $S$ and $T$ be
disjoint finite sets. For a stationary $\mZ^d$ MRF $\mu$ and a
fixed configuration $y \in \A^T$, with $\mu(y) > 0$, we define
$$
H_\mu(S \ | \ y) = \sum_{x \in \A^{S}} -
\frac{\mu(xy)}{\mu(y)}\log\left(\frac{\mu(xy)}{\mu(y)}\right).
$$
Thus, we can write
\begin{equation}
\label{decompose1} H_\mu(S \ | \ T) = \sum_{y \in \A^T, ~~ \mu(y) >
0} \mu(y)H_\mu(S \ | \ y).
\end{equation}
If $T$ is the disjoint union of $T_1$ and $T_2$, we can write
\begin{equation}
\label{decompose2} H_\mu(S \ | \ T_1\cup T_2) = \sum_{y \in
\A^{T_1}: ~ \mu(y)
>0} \mu(y) \sum_{w \in \A^{T_2}:~~ \mu(wy) > 0 }
\mu(w \ | \ y) H_\mu(S \ | \ wy).
\end{equation}

We can also define the entropy of a stationary stationary
$\zz^d$-measure itself, also known as entropy rate in information
theory.

\begin{definition} The {\bf measure-theoretic entropy} of a
stationary $\zz^d$-measure $\mu$ on $\A^{\zz^d}$ is defined by
\[
h(\mu)=\lim_{j_1, j_2, \ldots, j_d \rightarrow \infty} \frac{
H_\mu(S_{j_1 \ldots j_d})}{j_1 j_2 \cdots j_d},
\]
where $S_{j_1 j_2 \ldots j_d}$ denotes the $j_1 \times j_2 \times
\ldots \times j_d$ rectangular prism $\prod_{i=1}^d [1, j_i]$.
\end{definition}

It is well known that the limit exists independent of the rates at
which each $j_1, j_2, \ldots, j_d$ approach infinity~\cite[Theorem
15.12]{Georg}.

There is also a useful conditional entropy formula for $h(\mu)$.
For this, we consider the usual lexicographic order on $\zz^d$: $x
\prec y$ if for some $1 \le k \le d$, $x_i = y_i$ for $i=1, \ldots,
k-1$ and $x_k < y_k$. Let $\P^- = \{ z \in \mZ^d: ~ z \prec \b0\}$.
where $\b0$ denotes the origin.

\

\begin{theorem}~\cite[Equation 15.18]{Georg}
Let $\mu$ be a stationary $\zz^d$-measure. Then
$$
h(\mu) = H_\mu(\b0 \ | \ \P^-).
$$
\end{theorem}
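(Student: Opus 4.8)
The plan is to prove the conditional entropy formula $h(\mu) = H_\mu(\b0 \mid \P^-)$ by relating the full rectangular-prism entropies to a telescoping sum of conditional entropies, where each summand is a shifted copy of a conditional entropy over a finite approximation to the past $\P^-$. First I would fix a large cube $S_{n\ldots n} = \prod_{i=1}^d [1,n]$ and enumerate its sites $v_1 \prec v_2 \prec \cdots \prec v_{n^d}$ in lexicographic order. Applying the chain rule for entropy (the iterated version of the decomposition in \eqref{decompose2}), I would write
\begin{equation*}
H_\mu(S_{n \ldots n}) = \sum_{k=1}^{n^d} H_\mu\big(\{v_k\} \ \big| \ \{v_1, \ldots, v_{k-1}\}\big).
\end{equation*}
By stationarity, each term equals $H_\mu\big(\b0 \mid (\{v_1,\ldots,v_{k-1}\} - v_k)\big)$, where the conditioning set is the portion of the lexicographic past of $\b0$ that lies inside the translated cube $S_{n\ldots n} - v_k$.

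Next I would argue that, for sites $v_k$ deep in the interior of the cube (say, at lattice distance at least $m$ from every face), the translated conditioning set $\{v_1,\ldots,v_{k-1}\} - v_k$ contains the entire portion of $\P^-$ within the ball of radius $m$ about $\b0$. Combined with the monotonicity Lemma~\ref{bounds}, this shows that each such interior term is squeezed between $H_\mu(\b0 \mid \P^-)$ and $H_\mu(\b0 \mid \P^- \cap B_m)$, a finite approximation. Since the fraction of sites $v_k$ that are within distance $m$ of the boundary of the cube is $O(m/n) \to 0$ as $n \to \infty$, and since the boundary terms are uniformly bounded by $\log |\A|$, the normalized sum $H_\mu(S_{n\ldots n})/n^d$ converges to the common value obtained by letting first $n \to \infty$ and then $m \to \infty$. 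The definition of $h(\mu)$ lets me take the cube limit along the diagonal $j_1 = \cdots = j_d = n$, so $h(\mu) = \lim_{m} H_\mu(\b0 \mid \P^- \cap B_m) = H_\mu(\b0 \mid \P^-)$, where the last equality is exactly the definition of conditional entropy on the infinite set $\P^-$.

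The main obstacle will be the bookkeeping in the squeeze argument: one must verify carefully that as $k$ ranges over interior sites, the translated sets $\{v_1,\ldots,v_{k-1}\} - v_k$ really do fill out larger and larger finite initial segments of $\P^-$, and that the two-parameter limit (in $n$ and in the approximation radius $m$) can be interchanged or iterated correctly. This requires confirming that $H_\mu(\b0 \mid \P^- \cap B_m) \to H_\mu(\b0 \mid \P^-)$, which is precisely the content of the extended definition of conditional entropy together with the monotonicity in Lemma~\ref{bounds}, and then controlling the vanishing boundary contribution uniformly. Once these convergence estimates are pinned down, the result follows; but since the statement is quoted directly from~\cite[Equation 15.18]{Georg}, I would expect the cleanest route is simply to cite that reference, as the paper does, rather than reproduce the full standard argument.
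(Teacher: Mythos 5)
The paper does not actually prove this statement --- it is quoted directly from Georgii (Equation 15.18), so there is no internal proof to compare against. Your sketch is the standard argument behind that citation, and it is essentially correct: the chain rule gives $H_\mu(S_{n\ldots n}) = \sum_k H_\mu(\{v_k\} \mid \{v_1,\ldots,v_{k-1}\})$; stationarity and translation-invariance of the lexicographic order turn each summand into $H_\mu(\b0 \mid T_k)$ with $T_k \subset \P^-$, so Lemma~\ref{bounds} gives the lower bound $H_\mu(S_{n\ldots n})/n^d \ge H_\mu(\b0 \mid \P^-)$ for \emph{every} term, not only interior ones; for sites at distance at least $m$ from all faces the set $T_k$ contains $\P^- \cap B_m$, giving the upper bound $H_\mu(\b0 \mid \P^- \cap B_m)$ on those terms, while the $O(mn^{d-1})$ boundary terms contribute at most $\log|\A|$ each. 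Letting $n \to \infty$ at fixed $m$ and then $m \to \infty$ (using that $H_\mu(\b0 \mid \P^- \cap B_m) \downarrow H_\mu(\b0 \mid \P^-)$ by the definition of conditional entropy on infinite sets) closes the squeeze; there is no genuine interchange-of-limits issue since the two bounds are handled separately. The only thing your write-up leaves implicit is the justification that $v_j \prec v_k$ if and only if $v_j - v_k \prec \b0$, which is what makes the translated past land inside $\P^-$; that is a one-line check. In short: your argument is sound and self-contained where the paper simply defers to the reference, and your own closing suggestion --- to cite~\cite[Equation 15.18]{Georg} --- is exactly what the paper does.
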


\section{Entropy bounds for stationary MRF's}

Let $\P^+ = \{ z \in \mZ^d: ~ z \succeq \b0\}$. Then $\P^+ =
\mathbb{Z}^d \setminus \P^-$. Let $B_n$ denote the $d$-dimensional cube of side
length $2n+1$ centered at $\b0$. Let $S_n = B_n \cap \P^+$, and $U_n
= B_n \cap \partial \P^+$.

We claim that $U_n \subset \partial S_n$.  To see this, note that,
by definition, if $x \in \partial \P^+$, then $x \in \P^-$ and $x$
has a nearest neighbor $y \in \P^+$.  It follows that for some $1
\le k \le d$, we have $x_i = y_i$ for all $i \ne k$ and either ($x_k
= -1$ and $y_k = 0$) or ($x_k = 0$ and $y_k = 1$).  In either case,
if $x \in U_n = B_n \cap \partial \P^+$,  then $y \in B_n$ and so $y
\in S_n$. Thus, $x \in \partial S_n$. Figure~\ref{approxpic1} shows
these sets for $d=2$.

\begin{figure}[h]
\centering
\includegraphics[scale=0.4]{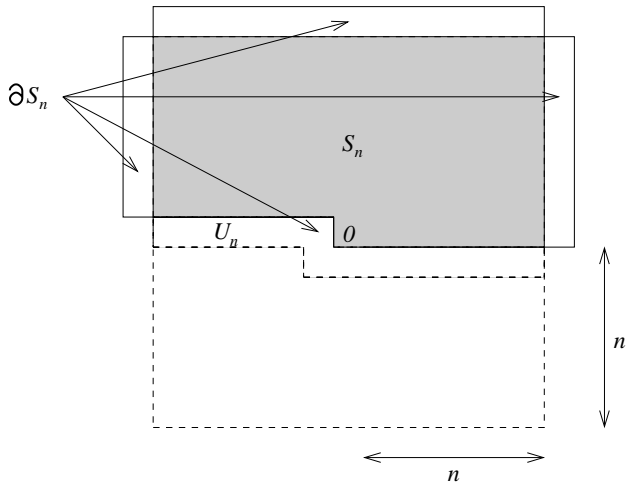}
\caption{$U_n$, $S_n$, and $\partial S_n$} \label{approxpic1}
\end{figure}

\begin{lemma}
\label{xxx} Let $\mu$ be a stationary $\mZ^d$ MRF. Then
\begin{equation}
\label{eqn1} H_\mu(\b0 \ | \ \partial S_n) \le h(\mu) \le H_\mu(\b0
\ | \ U_n).
\end{equation}
\end{lemma}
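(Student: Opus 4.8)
The plan is to prove the two inequalities in (\ref{eqn1}) separately, using only the monotonicity of conditional entropy (Lemma~\ref{bounds}), the identity $h(\mu) = H_\mu(\b0 \ | \ \P^-)$, and the MRF property. The upper bound will be immediate, while the lower bound is where the MRF hypothesis does the real work.

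For the upper bound $h(\mu) \le H_\mu(\b0 \ | \ U_n)$, I would first observe that $U_n \subseteq \P^-$: by definition $\partial \P^+$ consists of sites of $\mZ^d \setminus \P^+ = \P^-$ that are adjacent to $\P^+$, so $U_n = B_n \cap \partial \P^+ \subseteq \P^-$. Since $\b0 \notin \P^-$, Lemma~\ref{bounds} applied with $T = \P^-$ and $T' = U_n$ gives $H_\mu(\b0 \ | \ \P^-) \le H_\mu(\b0 \ | \ U_n)$, and the left-hand side equals $h(\mu)$. No use of the MRF property is needed here.

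The lower bound $H_\mu(\b0 \ | \ \partial S_n) \le h(\mu) = H_\mu(\b0 \ | \ \P^-)$ is the heart of the matter. The key observation is that, because $\mu$ is an MRF and $\b0 \in S_n$, the conditional distribution of the symbol at $\b0$ given a configuration on any set $T$ with $\partial S_n \subseteq T \subseteq \mZ^d \setminus S_n$ depends only on the restriction to $\partial S_n$. Concretely, the MRF property applied to the finite region $S_n$ gives $\mu(\eta \ | \ \delta) = \mu(\eta \ | \ \delta|_{\partial S_n})$ for every $\eta \in \A^{S_n}$ and every $\delta \in \A^T$ of positive measure; summing over all $\eta$ that agree with a fixed symbol at $\b0$ yields the same identity for the marginal at $\b0$, and hence $H_\mu(\b0 \ | \ T) = H_\mu(\b0 \ | \ \partial S_n)$ for every such finite $T$. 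Choosing a nested exhaustion $T_1 \subset T_2 \subset \cdots$ of $\P^- \cup \partial S_n$ with each $T_k$ satisfying $\partial S_n \subseteq T_k \subseteq \mZ^d \setminus S_n$ (possible since $\P^- \cup \partial S_n \subseteq \mZ^d \setminus S_n$ and $\partial S_n$ is finite) and passing to the limit via the definition of conditional entropy on infinite sets, I obtain $H_\mu(\b0 \ | \ \P^- \cup \partial S_n) = H_\mu(\b0 \ | \ \partial S_n)$. Finally, since $\P^- \subseteq \P^- \cup \partial S_n$ and $\b0 \notin \P^- \cup \partial S_n$, Lemma~\ref{bounds} gives $H_\mu(\b0 \ | \ \P^- \cup \partial S_n) \le H_\mu(\b0 \ | \ \P^-)$; chaining this with the previous identity completes the lower bound.

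The main obstacle I anticipate is the bookkeeping in the lower bound: one must verify that the MRF identity for whole configurations on $S_n$ descends to the marginal at $\b0$ and then to the level of conditional entropies (which requires grouping the sum over $\delta \in \A^T$ according to $\delta|_{\partial S_n}$ and invoking the positivity of the relevant conditional measures), and that the limit defining $H_\mu(\b0 \ | \ \P^- \cup \partial S_n)$ is insensitive to the exhausting sequence, so that one may force each $T_k$ to contain $\partial S_n$. Once these routine checks are in place, the inequality follows purely from the two applications of monotonicity sandwiching the MRF-invariance identity.
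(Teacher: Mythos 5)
Your proof is correct and follows essentially the same route as the paper: the upper bound from $U_n \subseteq \P^-$ and monotonicity, and the lower bound by using the MRF property to identify $H_\mu(\b0 \ | \ \partial S_n \cup \P^-)$ with $H_\mu(\b0 \ | \ \partial S_n)$ and then applying monotonicity against $\P^-$. The paper states the MRF step in one line where you spell out the reduction to the marginal at $\b0$ and the exhaustion of $\P^- \cup \partial S_n$, but the argument is the same.
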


\begin{proof}
Since $h(\mu) = H_{\mu}(\b0 \ | \ \P^-)$ and $U_n \subset \P^-$, it
follows from Lemma~\ref{bounds} that
\begin{equation}
\label{eqn2} H_\mu(\b0 \ | \ \partial S_n  \cup \P^-) \le  h(\mu)
\le H_\mu(\b0 \ | \ U_n).
\end{equation}
But since $\b0 \in S_n$, $S_n \cap \P^- = \varnothing$ and $\mu$ is
a $\mZ^d$ MRF, it follows that the left-hand sides of (\ref{eqn1})
and (\ref{eqn2}) agree.
\end{proof}

%


We remind the reader of standard notational conventions.  For a
function $f$ on the integers, we write $f= O(n)$ to mean there
exists a constant $C > 0$ such that for sufficiently large $n$,
$|f(n)| \le Cn$ and $f=\Omega(n)$ to mean  there exists a constant
$C > 0$ such that for sufficiently large $n$, $f(n) \ge Cn$.

\begin{theorem}
\label{approx} Let $\mu$ be a stationary $\mZ^d$ MRF that satisfies
SSM. Then
\newline $\big|H_\mu(\b0 \ | \ U_n) - H_\mu(\b0 \ | \ \partial S_n)\big| \ = \
e^{-\Omega(n)}$.
\end{theorem}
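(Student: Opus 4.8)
The plan is to read the quantity $H_\mu(\b0 \mid U_n) - H_\mu(\b0 \mid \partial S_n)$ as a conditional mutual information and to control it using SSM together with the uniform continuity of the entropy function. Write $W = \partial S_n \setminus U_n$. The first step is the geometric observation that $W \subseteq \mZ^d \setminus B_n$, so that $d(\{\b0\}, W) \ge n+1$. Indeed, if $v \in \partial S_n \cap B_n$, then $v \notin S_n = B_n \cap \P^+$ forces $v \in \P^-$; since $v$ is adjacent to $S_n \subseteq \P^+$ we get $v \in \partial \P^+$, whence $v \in B_n \cap \partial \P^+ = U_n$. Thus every $v \in \partial S_n$ not in $U_n$ lies outside $B_n$, and any such $v$ has $\|v\|_\infty \ge n+1$, hence $d(\{\b0\}, v) \ge n+1$. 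By Lemma~\ref{xxx} the difference in question is nonnegative, so it suffices to bound it from above.

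Next I would decompose the two conditional entropies over $\A^{U_n}$ and $\A^{W}$. Writing $a$ for a configuration on $U_n$ and $b$ for a configuration on $W$, formula (\ref{decompose2}) with $T_1 = U_n$, $T_2 = W$ gives
\[
H_\mu(\b0 \mid \partial S_n) = \sum_{a:~\mu(a)>0} \mu(a) \sum_{b:~\mu(ab)>0} \mu(b \mid a)\, H_\mu(\b0 \mid ab),
\]
while (\ref{decompose1}) gives $H_\mu(\b0 \mid U_n) = \sum_a \mu(a) H_\mu(\b0 \mid a)$. Subtracting and using $\sum_b \mu(b\mid a) = 1$,
\[
H_\mu(\b0 \mid U_n) - H_\mu(\b0 \mid \partial S_n) = \sum_{a} \mu(a) \sum_{b} \mu(b \mid a)\,\big(H_\mu(\b0 \mid a) - H_\mu(\b0 \mid ab)\big),
\]
so it is enough to bound each inner difference $H_\mu(\b0 \mid a) - H_\mu(\b0 \mid ab)$ uniformly in $a$ and $b$.

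The heart of the argument is to compare the probability vectors $\big(\mu(x \mid a)\big)_{x \in \A}$ and $\big(\mu(x \mid ab)\big)_{x \in \A}$ on the single site $\b0$. Since $\mu(x \mid a) = \sum_{b':~\mu(ab')>0} \mu(b' \mid a)\,\mu(x \mid ab')$ is a convex combination, for each $x$ I can write
\[
\mu(x \mid ab) - \mu(x \mid a) = \sum_{b'} \mu(b' \mid a)\,\big(\mu(x \mid ab) - \mu(x \mid ab')\big).
\]
Now SSM applies with $V = S_n$, $u = \b0 \in S_n$, and $T = \partial S_n = \partial V$: the configurations $ab$ and $ab'$ on $\partial S_n$ agree on $U_n$ and differ only on $W$, so $D(ab,ab') \subseteq W$ and $d(\{\b0\}, D(ab,ab')) \ge n+1$. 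Hence $|\mu(x \mid ab) - \mu(x \mid ab')| \le Ce^{-\alpha(n+1)}$, and the convex combination above yields $|\mu(x \mid ab) - \mu(x \mid a)| \le Ce^{-\alpha(n+1)}$ for every $x$. Consequently the two conditional distributions at $\b0$ are within $\ell^1$-distance $\delta_n := |\A|\,Ce^{-\alpha(n+1)}$.

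Finally I would invoke the standard uniform continuity estimate for entropy: if two probability vectors on $\A$ are within $\ell^1$-distance $\delta \le 1/2$, then their entropies differ by at most $\delta \log(|\A|/\delta)$. This gives $|H_\mu(\b0 \mid a) - H_\mu(\b0 \mid ab)| \le \delta_n \log(|\A|/\delta_n)$ for all large $n$, uniformly in $a$ and $b$. Substituting into the displayed decomposition and using that the weights $\mu(a)\mu(b\mid a)$ sum to $1$, we obtain $0 \le H_\mu(\b0 \mid U_n) - H_\mu(\b0 \mid \partial S_n) \le \delta_n \log(|\A|/\delta_n)$. Since $-\log\delta_n = \alpha(n+1) - \log(|\A|C)$ is linear in $n$, the right-hand side is $O\!\big(n e^{-\alpha n}\big) = e^{-\Omega(n)}$, which completes the proof. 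The point requiring the most care is this continuity estimate near the boundary of the simplex: because entropy is only $\delta\log(1/\delta)$-continuous there, converting the SSM bound $\|\mu(\cdot\mid a) - \mu(\cdot\mid ab)\| \le e^{-\alpha n}$ into an entropy bound costs an extra factor linear in $n$, but this factor is harmlessly absorbed into the $e^{-\Omega(n)}$ rate.
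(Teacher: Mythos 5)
Your proof is correct and follows essentially the same route as the paper's: the same decomposition of $H_\mu(\b0 \mid U_n) - H_\mu(\b0 \mid \partial S_n)$ over configurations on $U_n$ and $\partial S_n \setminus U_n$, and the same application of SSM with $V = S_n$, $T = \partial S_n$ via the convex-combination trick to compare the conditional distributions $\mu(\cdot \mid a)$ and $\mu(\cdot \mid ab)$ at the single site $\b0$. The only cosmetic difference is the final step, where you invoke the classical $\ell^1$ continuity estimate $\delta\log(|\A|/\delta)$ for entropy while the paper applies H\"older continuity of $z \mapsto -z\log z$ termwise; both absorb the resulting extra factor of $n$ into the $e^{-\Omega(n)}$ rate.
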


\begin{proof}
Let $L_n =  \partial S_n \setminus U_n$.  Then $\partial S_n$ is the
disjoint union of $U_n$ and $L_n$. For every configuration $y \in
\A^{U_n}$ such that $\mu(y)
> 0$, let
$$
E(y) = \{w \in \A^{L_n}: \mu(yw) > 0\}.
$$
By (\ref{decompose1}) and (\ref{decompose2}), we can write
\begin{equation}
\label{compare1} H_\mu(\b0  \ | \ U_n ) = \sum_{y \in \A^{U_n}:
~\mu(y)
> 0 }~ \mu(y) H_\mu(\b0  \ | \ y) \textrm{ and}
\end{equation}
\begin{equation}
\label{compare2} H_\mu(\b0  \ | \ \partial S_n ) = \sum_{y \in
\A^{U_n}: ~\mu(y) > 0}~ \mu(y) \sum_{w \in E(y)}~ \mu(w \ | \
y)H_\mu(\b0  \ | \ yw).
\end{equation}

Fix $y$ as above. Let $C$ and $\alpha$ be the positive constants for
SSM. For any configuration $y$ on $U_n$ and  $w,w' \in E(y)$, we
have
$ d(\{\b0\}, D(w, w')) \ge n $. By SSM applied to $V=S_n$, $T =
\partial S_n$, we have that for all $x \in \A^\b0$, $y \in
\A^{U_n}$, and $w,w' \in E(y)$,
$$
\big|\mu(x \ | \ yw) - \mu(x \ | \ yw')\big| \le Ce^{-\alpha n}.
$$
Now,
$$
\mu(x \ | \ y) = \sum_{w \in E(y)} \mu(w \ | \ y)\mu(x \ | \ yw),
$$
and so for all $w \in E(y)$,
\begin{multline*}
\big|\mu(x \ | \ y) - \mu(x \ | \ yw)\big| = \left|\left(\sum_{w'
\in E(y)} \mu(w' \ | \ y)\mu(x \ | \ yw')\right)
- \mu(x \ | \ yw)\right|\\
= \left|\sum_{w' \in E(y)} \mu(w' \ | \ y)(\mu(x \ | \ yw') -
\mu(x \ | \ yw))\right|\\
\le \sum_{w' \in E(y)} \mu(w' \ | \ y)\big|\mu(x \ | \ yw') - \mu(x
\ | \ yw)\big| \le Ce^{-\alpha n}.
\end{multline*}
Since the function $f(z) = -z\log z$ is H\"{o}lder continuous on
$[0,1]$, it follows that for some $C', \alpha' > 0$,
$$
\big|\mu(x \ | \ y)\log \mu(x \ | \ y) - \mu(x \ | \ yw) \log \mu(x
\ | \ yw) \big| \le C' e^{-\alpha' n}.
$$

%
%
%
%
%
%
%
%
%
%
%
%
%
%
%
%
%
%
%
%
%
%
%
%
%
%
%
%
%
%
%

Thus,
\begin{multline*}
\left|H_\mu(\b0  \ | \ y) - \sum_{w \in E(y)}~ \mu(w \ | \ y)H_\mu(\b0  \ | \ yw)\right| = \left|\sum_{w \in E(y)}~ \mu(w \ | \ y)\big(H_\mu(\b0  \ | \ y) - H_\mu(\b0  \ | \ yw)\big)\right|\\
\le \sum_{w \in E(y)}  \mu(w \ | \ y)\big|H_\mu(\b0  \ | \ y) - H_\mu(\b0  \ | \ yw))\big|\\
\le \sum_{x \in \A^\b0} ~\sum_{w \in E(y)}  \mu(w \ | \ y)\big|\mu(x \ | \ y)\log\mu(x \ | \ y)  - \mu(x \ | \ yw)\log \mu(x \ | \ yw)  \big|\\
\le \sum_{x \in \A^\b0} ~\sum_{w \in E(y)}  \mu(w \ | \ y)
C'e^{-\alpha'n} \le |\A|C'e^{-\alpha' n}.
\end{multline*}
Applying (\ref{compare1}) and (\ref{compare2}), we get
$$
\big|H_\mu(\b0 \ | \ U_n) - H_\mu(\b0 \ | \ \partial S_n)\big| \le
|\A|C'e^{-\alpha' n} = e^{-\Omega(n)}.
$$
\end{proof}

By combining Lemma~\ref{xxx} and Theorem~\ref{approx}, we obtain
exponentially accurate upper and lower approximations to $h(\mu)$
for any stationary $\mZ^d$ MRF $\mu$ which satisfies SSM.  In the following section, we
show that when the MRF is a (nearest-neighbor) Gibbs measure
(defined in the next section) and $d=2$, there is an efficient algorithm to approximate these bounds.
A version of the well-known Hammersley-Clifford
theorem~\cite{preston} shows that any fully supported
(nearest-neighbor) MRF is a (nearest-neighbor) Gibbs measure.
However, that result can fail in general; see~\cite{chandgotia} for
an example based on a construction for finite graphs given
in~\cite{Mou}.




%
%
%

\section{Computation of entropy bounds for stationary Gibbs
measures} \label{sec:compute}

Let $\gamma: \A \rightarrow (0,\infty)$, $\beta_i: \A \times A
\rightarrow [0,\infty)$,  $i = 1, \ldots, d$. For a finite $V
\subset \mZ^d$ and $w \in  \A^V$, let
$$
I(w) = \left(\prod_{v \in V} ~\gamma(v)\right)\prod_{i=1}^d
~~\prod_{\{v \in V: v +e_i \in V\}} ~\beta_i(v, v+e_i).
$$

A configuration  $\delta \in \A^{\partial V}$ is called {\bf
$V$-admissible} if there exists at least one $w \in \A^V$ such that
$I(w\delta)> 0$. 

\begin{definition}
Given $\gamma, \beta_i$ as above, for all $|V| < \infty$ and
$V$-admissible $\delta$, define for all $w \in \A^V$,
$$
\Lambda^{\delta}(w) = \frac{I(w\delta)}{\sum_{x \in \A^V} ~
I(x\delta)}.
$$
The collection $\{\Lambda^{\delta}\}_{V,\delta}$ is called a {\bf
stationary $\mZ^d$ Gibbs specification} for the {\bf local interactions $\gamma$, $\beta_i$}.
\end{definition}

Note that each $\Lambda^{\delta}$ is a probability measure on
$\A^V$, and for $U \subset V$ and $w\in \A^U$,
$$
\Lambda^{\delta}(w) = \sum_{c \in \A^{V\setminus U}}~
\Lambda^{\delta}(wc).
$$
Also, we can regard $\Lambda^{\delta}$ as a probability measure on
configurations $y \in \A^{V \cup \partial V}$ that agree with $\delta$ on
$\partial V$.

%
%
%

\begin{definition}
A {\bf stationary $\mZ^d$ Gibbs measure} for a stationary
$\mZ^d$ Gibbs specification is a stationary $\mZ^d$ MRF $\mu$ on
$\A^{\mZ^d}$ such that for any finite set $V$ and $\delta \in
\A^{\partial V}$, if $\mu(\delta) > 0$ then $\delta$ is
$V$-admissible and for all $x \in \A^V$
$$
\mu(x \ | \ \delta) = \Lambda^{\delta}(x).
$$
\end{definition}

Specifications can be used to define MRF's, not just Gibbs measures
(see~\cite{Georg}).  However, we find the concept of specification most useful for
Gibbs measures.

Gibbs measures, as defined here, are often referred to as
``nearest-neighbor'' Gibbs measures in the literature. Note that
since the $\beta_i$ are allowed to take on the value $0$, a Gibbs
measure need not be fully supported. Also, note that, by definition,
a necessary condition for $\mu(\delta) > 0$ is $V$-admissibility of
$\delta$.  While there may be no finite procedure for determining if
a configuration $\delta$ has positive measure, there is a finite
procedure for determining if $\delta$ is $V$-admissible. For this
reason, we impose an SSM condition on the specification that defines
a Gibbs measure, rather than the Gibbs measure itself.

\begin{definition}
A stationary $\mZ^d$ Gibbs specification $\Lambda$ satisfies {\bf
strong spatial mixing} (SSM) if there exist constants $C, \alpha >
0$, such that for all finite $V \subset \mZ^d$, $u \in V$, $\partial
V \subseteq T \subset V \cup \partial V$, $x \in \A^{\{u\}}$,
$y, z \in \A^T$, such that $\delta = y|_{\partial V}$ and $\eta =
z|_{\partial V}$ are $V$-admissible and $\Lambda^\delta(y),
\Lambda^\eta(z)
> 0$, then
$$
\big|\Lambda^\delta(x \ | \ y) - \Lambda^\eta(x \ | \ z)\big| \le
Ce^{-\alpha d(\{u\}, D(y,z))}.
$$
\end{definition}

Note that if the specification of a Gibbs measure $\mu$ satisfies
SSM, then the measure $\mu$ itself satisfies SSM as an MRF.
It is well known that when the specification satisfies SSM
there is
a unique Gibbs measure corresponding to the specification.
In fact, a weaker notion of spatial mixing, known as weak spatial mixing~\cite{Martin}, is sufficient.


A simple application of the chain rule for
probability distributions shows that our definition of SSM also
implies a version for conditional distributions on larger sets.

\begin{lemma}
\label{ssm-set} For any  stationary $\mZ^d$ Gibbs specification  that
satisfies SSM,  there exist constants $C, \alpha > 0$, such that for
any finite $V \subset \mZ^d$, $U \subseteq V$, $\partial V \subseteq
T \subset V \cup \partial V$, $x \in \A^{U}$,
$y, z \in \A^T$, such that $\delta = y|_{\partial V}$ and $\eta =
z|_{\partial V}$ are $V$-admissible and $\Lambda^\delta(y),
\Lambda^\eta(z)
> 0$, then
\begin{equation}
\label{upper-bound-ssm}
\big|\Lambda^\delta(x \ | \ y) - \Lambda^\eta(x \ | \ z)\big| \le
|U|Ce^{-\alpha d(U, D(y,z))}.
\end{equation}
(The constants $C$, $\alpha$ can be taken to be those in the
definition of SSM.)
\end{lemma}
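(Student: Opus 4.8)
The goal is to upgrade the single-site SSM bound to a set-valued version, paying only a factor of $|U|$ in the constant. The plan is to fix an enumeration $u_1, u_2, \ldots, u_m$ of the sites in $U$ (so $m = |U|$), write $x = x_1 x_2 \cdots x_m$ with $x_j \in \A^{\{u_j\}}$, and use the chain rule to decompose
$$
\Lambda^\delta(x \mid y) = \prod_{j=1}^m \Lambda^\delta\big(x_j \ \big| \ y \, x_1 \cdots x_{j-1}\big),
$$
and likewise for $z$ with $\eta$ in place of $\delta$. Here the conditioning configuration $y\,x_1\cdots x_{j-1}$ should be read as enlarging the ``boundary-data'' argument by the already-fixed single-site values on $u_1, \ldots, u_{j-1}$, which are interior sites of $V$; each factor is then an honest single-site conditional probability to which the hypothesized single-site SSM applies.

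The heart of the argument is a standard telescoping estimate: to bound the difference of two products $\prod_j a_j - \prod_j b_j$ where $a_j, b_j \in [0,1]$, I would telescope term by term, writing the difference as a sum of $m$ terms in each of which all but one factor is replaced, and using that the untouched factors are probabilities bounded by $1$. This gives
$$
\big|\Lambda^\delta(x \mid y) - \Lambda^\eta(x \mid z)\big| \le \sum_{j=1}^m \big|\Lambda^\delta\big(x_j \mid y\,x_1\cdots x_{j-1}\big) - \Lambda^\eta\big(x_j \mid z\,x_1\cdots x_{j-1}\big)\big|.
$$
For each $j$, the single-site SSM hypothesis (applied with the site $u_j$, and with the two enlarged conditioning configurations agreeing exactly where $y$ and $z$ agree, since the appended values $x_1 \cdots x_{j-1}$ are identical on both sides) bounds the $j$th summand by $C e^{-\alpha d(\{u_j\}, D(y,z))}$. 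Since each $u_j \in U$, we have $d(\{u_j\}, D(y,z)) \ge d(U, D(y,z))$, so every summand is at most $C e^{-\alpha d(U, D(y,z))}$, and summing over the $m = |U|$ terms yields the claimed bound (\ref{upper-bound-ssm}) with the same constants $C, \alpha$.

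The main point requiring care — and the step I expect to be the genuine obstacle — is verifying that the single-site SSM definition legitimately applies to each telescoped factor. Two things must be checked. First, the appended interior values $x_1 \cdots x_{j-1}$ must be incorporable into the conditioning data in a way that keeps the relevant configurations $V$-admissible and of positive $\Lambda$-measure; this follows from the positivity hypotheses $\Lambda^\delta(y), \Lambda^\eta(z) > 0$ together with the product structure of $I$, but it should be stated explicitly since the SSM definition presupposes positive measure of the conditioning events. Second, and more subtly, the disagreement set of the two enlarged conditioning configurations is exactly $D(y,z)$: the newly appended coordinates agree on both sides by construction, so they contribute nothing to the disagreement set, which is what preserves the exponent $d(U, D(y,z))$ rather than degrading it. Once these admissibility and disagreement-set bookkeeping points are in place, the telescoping and the summation are routine, and the factor $|U|$ emerges precisely from the number of terms in the chain-rule expansion.
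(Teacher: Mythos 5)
Your proposal is correct and follows essentially the same route as the paper's proof: an arbitrary ordering of the sites of $U$, the chain rule to factor $\Lambda^\delta(x \mid y)$ into single-site conditional probabilities, a telescoping bound on the difference of the two products (with the untouched probability factors bounded by $1$), and the observation that the appended interior values agree on both sides so each summand is controlled by $Ce^{-\alpha d(\{u_j\}, D(y,z))} \le Ce^{-\alpha d(U, D(y,z))}$. The admissibility and positivity bookkeeping you flag is indeed needed and is left implicit in the paper, so your extra care there is welcome rather than a deviation.
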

\begin{proof}
Arbitrarily order the sites in $U$ as
$1, 2, \ldots, {|U|}$.
Then
$$
\big|\Lambda^\delta(x \ | \ y) - \Lambda^\eta(x \ | \ z)\big| =
\left| \left(\prod_{i=1}^{|U|}\Lambda^\delta(x_i \ | \ y, x_1,
\ldots x_{i-1})\right) - \left(\prod_{i=1}^{|U|}\Lambda^\eta(x_i \ |
\ z, x_1, \ldots x_{i-1})\right)\right|
$$
\begin{multline*}
\le \Bigg[ \sum_{i=1}^{|U|}
\left(\prod_{j=1}^{i-1}\Lambda^\delta(x_j \ | \ y, x_1, \ldots
x_{j-1})\right) \left(\prod_{j=i+1}^{|U|}
\Lambda^\eta(x_j \ | \ z, x_1, \ldots x_{j-1})\right)\\
\big|\Lambda^\delta(x_i \ | \ y, x_1, \ldots x_{i-1}) -
\Lambda^\eta(x_i \ | \ z, x_1, \ldots x_{i-1})\big| \Bigg] \le
C|U|e^{-\alpha d(U, D(y,z))}.
\end{multline*}
\end{proof}

The following is the main result of this section.

\begin{theorem}
\label{approx2} Let $\mu$ be a stationary $\mZ^d$ Gibbs measure
whose specification satisfies SSM. Let $(K_n)$, $n \in \mathbb{N}$,
be a sequence of sets satisfying $K_n \subset B_n$ and $|K_n| = O(n^{d-1})$.
Then there is an algorithm which, on input $n$,
computes upper and lower bounds to $H_\mu(\b0 \ | \ K_n)$ in time
$e^{O(n^{(d-1)^2})}$ to within tolerance $e^{-n^{d-1}}$.
\end{theorem}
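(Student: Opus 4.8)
The plan is to approximate $\mu$ by a finite-volume Gibbs distribution that is genuinely computable, estimate $H_\mu(\b0 \mid K_n)$ by the corresponding quantity for that distribution, and carry out the latter computation by a transfer-matrix sweep. Fix $m \ge n$ (to be chosen as $m = \Theta(n^{d-1})$) and set $V = B_m$. Pick any $V$-admissible $\delta_0 \in \A^{\partial V}$ and let $\nu = \Lambda^{\delta_0}$, viewed as a probability measure on $\A^{B_m}$. Since $\{\b0\} \cup K_n \subset B_m$, the marginals $\nu(y)$ (for $y \in \A^{K_n}$) and $\nu(xy)$ (for $x \in \A^{\{\b0\}}$) are well-defined, and each is a ratio of finite sums of products of the local interactions $\gamma, \beta_i$, hence computable to arbitrary precision. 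I would then compute $H_\nu(\b0 \mid K_n) = \sum_{y:\nu(y)>0}\nu(y)H_\nu(\b0\mid y)$ and output it together with an explicit error bar as the required bounds.

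The approximation step is where SSM enters, and it splits into two pieces. First, for the conditional distribution at the origin: since $\mu$ is a Gibbs measure, $\mu(x\mid y) = \sum_{\delta}\mu(\delta\mid y)\Lambda^{\delta}(x\mid y)$ is a convex combination of the $\Lambda^{\delta}(x\mid y)$, and any two of these (together with $\nu(x\mid y)=\Lambda^{\delta_0}(x\mid y)$) differ only in a boundary condition on $\partial B_m$, whose disagreement set lies at $L^1$-distance at least $m+1$ from $\b0$; basic SSM gives $|\mu(x\mid y)-\nu(x\mid y)|\le Ce^{-\alpha(m+1)}$. Combined with H\"older continuity of $z\mapsto -z\log z$ exactly as in the proof of Theorem~\ref{approx}, this yields $|H_\mu(\b0\mid y)-H_\nu(\b0\mid y)|\le e^{-\Omega(m)}$ uniformly in $y$. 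Second, for the weights: applying Lemma~\ref{ssm-set} with $U=K_n$, $T=\partial V$ to the marginals on $K_n$ shows $|\Lambda^{\delta}(y)-\Lambda^{\delta_0}(y)|\le |K_n|Ce^{-\alpha(m+1-n)}$, and averaging $\mu(y)=\sum_\delta\mu(\delta)\Lambda^{\delta}(y)$ over the boundary gives $|\mu(y)-\nu(y)|\le |K_n|Ce^{-\alpha(m+1-n)}$ for every $y$. Writing $H_\mu(\b0\mid K_n)-H_\nu(\b0\mid K_n)=\sum_y(\mu(y)-\nu(y))H_\nu(\b0\mid y)+\sum_y\mu(y)\bigl(H_\mu(\b0\mid y)-H_\nu(\b0\mid y)\bigr)$ and using $0\le H_\nu(\b0\mid y)\le \log|\A|$, the first sum is at most $|\A|^{|K_n|}\,|K_n|\,C\,e^{-\alpha(m+1-n)}\log|\A|$ and the second is at most $e^{-\Omega(m)}$. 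Since $|K_n|=O(n^{d-1})$, so that $|\A|^{|K_n|}=e^{O(n^{d-1})}$, choosing $m=\Theta(n^{d-1})$ with a large enough constant makes the whole difference at most $\tfrac12 e^{-n^{d-1}}$.

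For the computation of $H_\nu(\b0\mid K_n)$ I would use the transfer-matrix method. Slice $B_m$ into its $2m+1$ codimension-one layers orthogonal to the last coordinate axis; each layer is a $(d-1)$-dimensional box of side $2m+1$, carrying at most $|\A|^{(2m+1)^{d-1}}$ configurations, and the nearest-neighbor interactions $\gamma,\beta_i$ factor into within-layer weights and layer-to-layer transfer matrices of dimension $|\A|^{(2m+1)^{d-1}}$. A single sweep of matrix multiplications computes the partition function $\sum_{x}I(x\delta_0)$, and by freezing the coordinates belonging to $K_n\cup\{\b0\}$ during the sweep one computes any required marginal $\nu(y)$ or $\nu(xy)$; each such computation costs $e^{O(m^{d-1})}$. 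Running over all $|\A|^{|K_n|}=e^{O(n^{d-1})}$ configurations on $K_n\cup\{\b0\}$ and assembling $H_\nu(\b0\mid K_n)$ therefore costs $e^{O(n^{d-1})}\cdot e^{O(m^{d-1})}$, which with $m=\Theta(n^{d-1})$ equals $e^{O(n^{(d-1)^2})}$ since $(d-1)^2\ge d-1$ for $d\ge 2$. Computing the finitely many logarithms to precision $\tfrac14 e^{-n^{d-1}}$ and adding the explicit approximation bar (computable from the known SSM constants $C,\alpha$) produces numbers $a_n\le H_\mu(\b0\mid K_n)\le b_n$ with $b_n-a_n\le e^{-n^{d-1}}$.

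The routine parts are the two SSM estimates, which closely follow Theorem~\ref{approx} and Lemma~\ref{ssm-set}. The crux is the interplay between the two exponential scales: the number of conditioning configurations on $K_n$ forces $m$ to grow like $n^{d-1}$ merely to beat the factor $|\A|^{|K_n|}$ in the weight error, and then the transfer-matrix cost $e^{O(m^{d-1})}$ becomes $e^{O(n^{(d-1)^2})}$ — so the whole running-time bound hinges on organizing the computation so that the slicing dimension contributes only $m^{d-1}$ (and not $m^{d}$) to the exponent. I also expect to spend care on two technical points: that the output is a genuine two-sided bound (handling configurations $y$ where exactly one of $\mu(y),\nu(y)$ vanishes, whose total contribution is absorbed into the same weight-error estimate), and that all arithmetic is carried out to sufficient, explicitly controlled precision.
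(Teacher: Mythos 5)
Your proposal is correct in substance and follows essentially the same route as the paper: approximate $\mu$ on $K_n\cup\{\b0\}$ by finite-volume conditional distributions on a box $B_m$ with $m=\Theta(n^{d-1})$, use SSM (and Lemma~\ref{ssm-set}) to bound the boundary-condition dependence, split the error into a conditional-probability term and a weight term multiplied by $|\A|^{|K_n|}$, and evaluate everything by a $(d-1)$-dimensional transfer-matrix sweep costing $e^{O(m^{d-1})}=e^{O(n^{(d-1)^2})}$. The one substantive divergence is that you fix a single boundary condition $\delta_0$ and attach an error bar ``computable from the known SSM constants $C,\alpha$.'' The paper deliberately avoids assuming $C$ and $\alpha$ are known: in Propositions~\ref{compute} and~\ref{compute2} it computes $\Lambda^{\delta}(w)$ for \emph{all} admissible $\delta\in\A^{\partial B_{n+m_n}}$ and takes the maximum and minimum, which are certified two-sided bounds on $\mu(w)$ regardless of the values of $C,\alpha$ (SSM is used only to prove, not to compute, that these bounds are close); and it then chooses $m_n=jn^{d-1}$ adaptively, incrementing $j$ until the resulting upper and lower entropy bounds agree to within $e^{-n^{d-1}}$, with SSM guaranteeing termination at some bounded $j$. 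Your version is exactly the simplification the authors describe in the remark following the proof, and it is fine if one grants the algorithm explicit values of $C$ and $\alpha$ (one can also read the theorem non-constructively, hard-coding the constants); but if the algorithm is to be constructible from the interactions $\gamma,\beta_i$ alone, you should replace the single-$\delta_0$ error bar by the max/min-over-$\delta$ device and the adaptive choice of $m$. Your remaining cautions (configurations $y$ where exactly one of $\mu(y),\nu(y)$ vanishes, and arithmetic precision) are genuine but minor, and are handled implicitly in the paper by the same absorption into the weight-error term.
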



\begin{remark}
{\rm For this and all subsequent results involving running time of
algorithms involving $\mu$, we do not count computation of the Gibbs
parameters $\gamma$ and $\beta_i$ towards the claimed running time.
(In other words, we assume that we are given approximations to
$\gamma$ and $\beta_i$ with arbitrarily good precision before
performing any computation.)
We also note that the algorithms here do not depend on knowledge of
specific values of the parameters $C$ and $\alpha$ of SSM.}
\end{remark}

As an immediate consequence of Lemma~\ref{xxx},
Theorem~\ref{approx}, and Theorem~\ref{approx2} (applied to $K_n =
\partial S_{n-1}$ and $K_n = U_n$), we have:

\begin{corollary}
\label{cor1} Let $\mu$ be a stationary $\mZ^d$ Gibbs measure whose
specification satisfies SSM. Then there is an algorithm
which, on input $n$, computes
upper and lower bounds to $h(\mu)$ in time $e^{O(n^{(d-1)^2})}$ to
within tolerance $e^{-\Omega(n)}$.
\end{corollary}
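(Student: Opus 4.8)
The plan is to approximate $\mu$ by its specification on a large box and to evaluate the resulting finite quantities by a transfer-matrix sweep. By (\ref{decompose1}),
$$H_\mu(\b0 \mid K_n) = \sum_{y \in \A^{K_n}:~ \mu(y) > 0} \mu(y) \sum_{x \in \A^{\{\b0\}}} f\big(\mu(x \mid y)\big), \qquad f(z) = -z\log z,$$
so it suffices to produce rigorous, computable estimates of the marginals $\mu(y)$ and the conditionals $\mu(x \mid y)$. Fix $M > n$ and set $V = B_M$, so $\{\b0\}\cup K_n \subset V$. For any $\delta \in \A^{\partial V}$ with $\mu(\delta) > 0$ the Gibbs property gives $\mu(\cdot \mid \delta) = \Lambda^\delta(\cdot)$; averaging over $\delta$ exhibits $\mu(y)$ as a convex combination of $\{\Lambda^\delta(y)\}$ and $\mu(x \mid y)$ as a convex combination of $\{\Lambda^\delta(x \mid y)\}$ (the latter over admissible $\delta$ with $\Lambda^\delta(y) > 0$). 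Since every positive-measure $\delta$ is admissible, this yields the rigorous brackets
$$\min_\delta \Lambda^\delta(y) \le \mu(y) \le \max_\delta \Lambda^\delta(y), \qquad \min_\delta \Lambda^\delta(x \mid y) \le \mu(x \mid y) \le \max_\delta \Lambda^\delta(x \mid y),$$
with the extrema over admissible $\delta$; crucially, these can be evaluated without knowing the SSM constants.

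Next I would control the interval widths by SSM. Applying the definition of SSM at $u=\b0$ to the configurations $y\delta, y\delta' \in \A^{K_n\cup\partial V}$, whose disagreement set lies in $\partial V$, bounds the conditional width by $Ce^{-\alpha d(\{\b0\},\partial V)} = Ce^{-\alpha(M+1)}$; applying Lemma~\ref{ssm-set} with $U = K_n$ and $T = \partial V$ (only the boundary conditioned on) bounds the marginal width by $|K_n|\,Ce^{-\alpha(M+1-n)}$. From the conditional brackets and the Hölder continuity of $f$ on $[0,1]$ used in the proof of Theorem~\ref{approx}, I obtain for each $y$ an enclosure $\underline H(y) \le \sum_x f(\mu(x\mid y)) \le \overline H(y)$ with $\overline H(y) - \underline H(y) = e^{-\Omega(M)}$, simply by replacing each $f(\mu(x\mid y))$ by the exact range of $f$ over the computed conditional interval. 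Fixing one admissible $\delta_0$ and putting $\hat p(y) = \Lambda^{\delta_0}(y)$ (a genuine probability vector, since $\sum_y \hat p(y) = 1$), the marginal brackets give $\sum_y|\mu(y) - \hat p(y)| \le \sum_y(\max_\delta \Lambda^\delta(y) - \min_\delta \Lambda^\delta(y))$, and combining the two enclosures produces computable numbers
$$B_{\mathrm{low}} = \sum_y \hat p(y)\,\underline H(y) \;-\; \log|\A|\sum_y\big(\max_\delta \Lambda^\delta(y) - \min_\delta \Lambda^\delta(y)\big),$$
together with the analogue $B_{\mathrm{up}}$ obtained by using $\overline H$ and the opposite sign, which satisfy $B_{\mathrm{low}} \le H_\mu(\b0 \mid K_n) \le B_{\mathrm{up}}$.

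A short estimate bounds the gap $B_{\mathrm{up}} - B_{\mathrm{low}}$ by $e^{-\Omega(M)} + 2\log|\A|\,|\A|^{|K_n|}|K_n|Ce^{-\alpha(M+1-n)}$; since $|K_n| = O(n^{d-1})$, the dominant second term is at most $e^{-n^{d-1}}$ once $M = O(n^{d-1})$. The algorithm therefore increases $M$ and halts, outputting $B_{\mathrm{low}},B_{\mathrm{up}}$, as soon as this (computable) gap drops below the tolerance; SSM guarantees termination at some $M = O(n^{d-1})$, with no reference to the values of $C$ and $\alpha$.

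The remaining and main obstacle is to evaluate the specification quantities $\Lambda^\delta(y)$ and $\Lambda^\delta(xy)$ within the claimed time. Summing $I(\cdot)$ over all configurations of $B_M$ naively costs $e^{O(M^{d})}$, which is too large. Instead I would compute each constrained partition function $\sum_{w} I(w\delta)$ (with the sites of $\{\b0\}\cup K_n$ frozen to the values prescribed by $x,y$) by a transfer-matrix sweep of $B_M$ in one coordinate direction: the state is a configuration on a $(d-1)$-dimensional slice, of which there are $|\A|^{(2M+1)^{d-1}} = e^{O(M^{d-1})}$, and the sweep runs in time polynomial in this count. With $M = O(n^{d-1})$ this is $e^{O(n^{(d-1)^2})}$, and, since there are $e^{O(M^{d-1})} = e^{O(n^{(d-1)^2})}$ admissible $\delta$, $e^{O(n^{d-1})}$ configurations $y$, $|\A|$ choices of $x$, and $(d-1)^2 \ge d-1$, the entire computation runs in time $e^{O(n^{(d-1)^2})}$, as required.
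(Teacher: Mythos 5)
Your proposal is, in substance, a proof of Theorem~\ref{approx2}: an algorithm that brackets $H_\mu(\b0 \mid K_n)$ by taking min/max of $\Lambda^\delta(\cdot)$ over admissible boundary conditions $\delta$ on $\partial B_M$, using SSM (and Lemma~\ref{ssm-set}) to control the interval widths, and a transfer-matrix sweep over $(d-1)$-dimensional slices for the running time. That part is sound and follows essentially the paper's own route through Propositions~\ref{compute} and~\ref{compute2} and Lemma~\ref{compute-lemma}; your variants (weighting by a fixed probability vector $\Lambda^{\delta_0}(\cdot)$ plus an $L^1$ correction term, and taking the exact range of $f$ over each computed interval rather than the paper's $\mu^{--}/\mu^{++}$ construction) are harmless, and your termination criterion correctly avoids needing the SSM constants $C,\alpha$.

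The gap is that the corollary asserts bounds on $h(\mu)$, and nothing in your argument relates $H_\mu(\b0 \mid K_n)$ to $h(\mu)$ --- indeed you never specify which $K_n$ to use. The missing ingredients are Lemma~\ref{xxx} and Theorem~\ref{approx}: starting from $h(\mu) = H_\mu(\b0 \mid \P^-)$, monotonicity of conditional entropy gives $h(\mu) \le H_\mu(\b0 \mid U_n)$ since $U_n \subset \P^-$, while the MRF property gives $H_\mu(\b0 \mid \partial S_n) = H_\mu(\b0 \mid \partial S_n \cup \P^-) \le h(\mu)$, and SSM (Theorem~\ref{approx}) shows these two sides differ by only $e^{-\Omega(n)}$. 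Only after this reduction does running your algorithm with $K_n = U_n$ (for the upper bound) and $K_n = \partial S_{n-1}$ (for the lower bound) yield computable numbers sandwiching $h(\mu)$ to within $e^{-\Omega(n)} + 2e^{-n^{d-1}} = e^{-\Omega(n)}$. Without that step the computed brackets on $H_\mu(\b0 \mid K_n)$ say nothing about $h(\mu)$; and since the bracketing algorithm is already supplied by Theorem~\ref{approx2}, this reduction is precisely the content of the corollary.
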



Note that for $d=2$ this gives an algorithm to
compute $h(\mu)$ to within $O(1/n)$ in  polynomial time (in $n$). 
\medskip

For the proof of Theorem~\ref{approx2}, we will need the following
result.

%

\begin{lemma}
\label{compute-lemma} Let $\mu$ be a stationary
 $\mZ^d$ Gibbs measure. Let $(K_n)$, $n \in \mathbb{N}$, be a sequence of sets
satisfying $K_n \subset B_n$ and $|K_n| = O(n^{d-1})$. Then for any sequence $(m_n)_{n \in \mathbb{N}}$ of positive
integers, there is an algorithm
which, on input $n$, determines which $\delta \in \A^{\partial B_{n+m_n}}$ are $B_{n+m_n}$-admissible and,
for those which are, computes $\Lambda^\delta(w)$ for all $w\in A^{K_n}$, in running time $e^{O((n+m_n)^{d-1})}$.
\end{lemma}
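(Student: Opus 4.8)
The plan is to replace the doubly-exponential cost of evaluating the defining sums for $\Lambda^\delta$ directly (a naive evaluation ranges over all of $\A^{B_{n+m_n}}$) by a transfer-matrix / dynamic-programming computation that sweeps across the cube one $(d-1)$-dimensional slice at a time, exploiting that $I(\cdot)$ is a product of single-site weights $\gamma$ and nearest-neighbor edge weights $\beta_i$. Write $N = n+m_n$ and $V = B_N$, so $|\partial B_N| = O(N^{d-1})$. Slice $V$ along the first coordinate into the $2N+1$ parallel slices $V_j = \{j\}\times\{-N,\dots,N\}^{d-1}$, $-N\le j\le N$; each is a copy of the $(d-1)$-cube $W = \{-N,\dots,N\}^{d-1}$ with $|W| = (2N+1)^{d-1}$ sites. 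The crucial point is that the number of slice configurations is $|\A|^{|W|} = e^{O(N^{d-1})}$, so a vector indexed by $\A^{W}$ has length $M := e^{O(N^{d-1})}$, and this is the state space on which the whole computation runs.

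For a fixed $\delta$, the interactions in $I(x\delta)$ factor across slices as
\[ I(x\delta) = L(x^{(-N)})\Big(\prod_{j=-N}^{N}\phi_j(x^{(j)})\Big)\Big(\prod_{j=-N}^{N-1}\psi(x^{(j)},x^{(j+1)})\Big)R(x^{(N)}), \]
where $x^{(j)}\in\A^{W}$ is the restriction of $x$ to $V_j$; here $\phi_j(a)$ collects the vertex weights of slice $j$ together with all edge weights incident to slice $j$ in directions $e_2,\dots,e_d$ (including edges joining slice $j$ to the portion of $\delta$ lying in the plane $\{j\}\times\mZ^{d-1}$), $\psi(a,b) = \prod_{v\in W}\beta_1(a_v,b_v)$ collects the $e_1$-edges between consecutive slices, and $L,R$ collect the $e_1$-edges joining the two end slices to the two caps of $\delta$. (Any weights internal to $\delta$ itself are positive, $x$-independent constants, so they cancel in the ratio defining $\Lambda^\delta$ and require only a trivial $O(N^{d-1})$ positivity check.) Then $Z_\delta := \sum_{x\in\A^{V}} I(x\delta)$ is obtained by propagating the length-$M$ vector $v_{-N}(a) = L(a)\phi_{-N}(a)$ through the recursion $v_{j+1}(b) = \phi_{j+1}(b)\sum_{a}\psi(a,b)v_j(a)$ and setting $Z_\delta = \sum_{a} v_N(a)R(a)$; each of the $O(N)$ steps is a matrix–vector product costing $O(M^2)$, so $Z_\delta$ is computed in time $e^{O(N^{d-1})}$. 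By definition $\delta$ is $V$-admissible iff $Z_\delta > 0$, which, since $Z_\delta$ is a sum of products of nonnegative weights, is decided by running the identical recursion over the Boolean semiring (using the known support of $\gamma,\beta_i$, i.e.\ which values vanish), again in time $e^{O(N^{d-1})}$.

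For an admissible $\delta$ and a fixed $w\in\A^{K_n}$ we have $\Lambda^\delta(w) = Z_\delta(w)/Z_\delta$, where $Z_\delta(w) := \sum_{c\in\A^{V\setminus K_n}} I(wc\delta)$. Since $K_n\subset B_n\subset V$, this constrained sum is computed by the very same recursion, except that at each slice $j$ the sum over $a\in\A^{W}$ is restricted to configurations agreeing with $w$ on $K_n\cap V_j$; this still costs $e^{O(N^{d-1})}$. Looping over all $w\in\A^{K_n}$ and all $\delta\in\A^{\partial B_N}$ produces every required value, and the total running time is a product of three factors each of the form $e^{O(N^{d-1})}$: there are $|\A|^{|\partial B_N|} = e^{O(N^{d-1})}$ boundary conditions $\delta$, at most $|\A|^{|K_n|} = e^{O(n^{d-1})}$ configurations $w$, and each individual (constrained) partition function costs $e^{O(N^{d-1})}$. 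Because $|K_n| = O(n^{d-1})\le O(N^{d-1})$, the total remains $e^{O(N^{d-1})} = e^{O((n+m_n)^{d-1})}$, as claimed.

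The step needing the most care is the clean definition of the per-slice and boundary weights $\phi_j,\psi,L,R$, so that every nearest-neighbor interaction of $I(x\delta)$ — including the interactions of each slice with the ``side-wall'' and ``cap'' portions of $\delta$ — is counted exactly once; this is routine but must be done carefully, especially the slice-plane boundary pieces, which for $d\ge 3$ are themselves nontrivial lower-dimensional sets read off from $\delta$. The remaining work is the running-time bookkeeping given above. Finally, since $\gamma,\beta_i$ are available to arbitrary precision and each $\Lambda^\delta(w)$ is a fixed finite arithmetic combination of them, any prescribed output precision is attainable without affecting the stated order of the running time.
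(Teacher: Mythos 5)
Your proof is correct and follows essentially the same route as the paper's: the paper likewise computes the constrained partition sums $I^\delta(w)$ by a transfer-matrix product over $(d-1)$-dimensional slices (written out explicitly for $d=2$, with entries zeroed out on slices that disagree with $w$ on $K_n$, which matches your restricted slice sums), and then loops over all $\delta$ and $w$ with the same $e^{O((n+m_n)^{d-1})}$ bookkeeping. Your explicit treatment of general $d$ and of the admissibility check over the Boolean semiring are minor elaborations of points the paper leaves implicit.
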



\begin{proof} For simplicity, we prove this only for $d=2$.  The general case follows
along similar lines.

Fix sequences $(K_n)$ and $(m_n)$, a particular value of $n$, and $w$ and $\delta$ as in the statement of the theorem.
$$
I^\delta(w) := \sum_{c \in \A^{B_{n+m_n}\setminus K_n}} ~I(wc\delta).
$$
We will show that
\begin{equation}
\label{matrix-product} I^\delta(w) = I(\delta)x \left(\prod_{i= -n -
m_n }^{n+m_n-1} \barM_i \right) y,
\end{equation}
where each $\barM_i$ is a square matrix and $x,y$ are vectors, all
indexed by $\A^{[-n-m_n , n+m_n]}$.  For $a \in \A^{[-n-m_n, n+m_n]}$, we
write $a = a_{-n-m_n}, \ldots a_{n+m_n}$.



For $i = -n-m_n, \ldots, n+m_n-1,$  define the transfer matrix
\begin{multline*}
(M_i)_{(a,b)} = \Bigg[\left(\prod_{j=-n-m_n}^{n+m_n} \gamma(a_j)
\beta_1(a_j, b_j) \right)
\left(\prod_{j=-n-m_n}^{n+m_n-1} \beta_2(a_j, a_{j+1})\right)\\
\beta_2(\delta_{i,-n-m_n-1}, a_{-n-m_n}) \beta_2(a_{n+m_n}, \delta_{i, n +
m_n +1})\Bigg].
\end{multline*}
Let $V_i =  \{i\} \times [-n-m_n+1, \ldots n+m_n-1]$ and let
$$
(\barM_i)_{(a,b)} = (M_i)_{(a,b)}
$$
except when $V_i \cap K_n \ne \varnothing$ and $a|_{\{j: (i,j) \in
K_n\}} \ne w|_{V_i \cap K_n}$, in which case we set
$(\barM_i)_{(a,b)} = 0$.   Let
$$
x_a = \prod_{j=-n-m_n}^{n+m_n} \beta_1(\delta_{j,-n-m_n-1}, a_j), \textrm{
and let}
$$
\begin{multline*}
y_a = \left(\prod_{j=-n-m_n}^{n+m_n} \beta_1(a_j,
\delta_{n+m_n+1,j}))\gamma(a_j)\right) \cdot \\
\beta_2(\delta_{n+m_n,-n-m_n-1},
a_{-n-m_n}) \beta_2(a_{n+m_n}, \delta_{n+m_n,n+m_n+1}).
\end{multline*}
The reader can now verify (\ref{matrix-product}).

Note that each $\barM_i$ can be constructed in time
$\big(e^{O(n+m_n)}\big)^2 = e^{O(n+m_n)}$, $x$ and $y$ can be computed
in time $e^{O(n+m_n)}$, and $I(\delta)$ can be computed in time
$O(n+m_n)$.
Each matrix multiplication takes time at most
$\big(e^{O(n+m_n)}\big)^3 = e^{O(n+m_n)}$. Thus, $I^\delta(w)$ can be
computed in time $e^{O(n+m_n)}$. This can be done for all $w \in
\A^{K_n}$ in time $e^{O(n+m_n)}e^{O(n)} = e^{O(n+m_n)}$.

Since
$$
\Lambda^{\delta}(w) = \frac{I^{\delta}(w)}{\sum_{x \in \A^{K_n}}~
I^{\delta}(x)},
$$
we can compute $\Lambda^{\delta}(w)$ for all $w\in \A^{K_n}$ and all
$B_{n+m_n}$-admissible $\delta \in \A^{\partial B_{n+m_n}}$ in time $\big(e^{O(n+m_n)}\big)^2 =
e^{O(n+m_n)}$.

For $d > 2$, the proof follows along similar lines using transfer
matrices indexed by configurations on $(d-1)$-dimensional arrays.


%
%
\end{proof}

\begin{proposition}
\label{compute} Let $\mu$ be a stationary $\mZ^d$ Gibbs measure
whose specification satisfies SSM
with constants $C$ and $\alpha$.
Let $(K_n)$, $n \in \mathbb{N}$,
be a sequence of sets satisfying $K_n \subset B_n$ and $|K_n| = O(n^{d-1})$.
Then for any sequence $(m_n)$ of positive integers,
there is an algorithm which, on input $n$,
computes upper and lower bounds $\mu^+(w)$ and $\mu^-(w)$ to
$\mu(w)$, for all $w \in \A^{K_n}$, in time $e^{O((n+m_n)^{d-1})}$,
such that
$$
\mu^+(w) - \mu^-(w) \le Ce^{-\alpha m_n}|K_n|.
$$
\end{proposition}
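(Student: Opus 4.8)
The plan is to approximate the true conditional measure $\mu(w)$ for $w \in \A^{K_n}$ by averaging the explicitly computable specification values $\Lambda^\delta(w)$ over all $B_{n+m_n}$-admissible boundary configurations $\delta \in \A^{\partial B_{n+m_n}}$, and to use SSM to control the error incurred by not knowing the true weights $\mu(\delta)$ with which these boundary conditions occur. The key observation is that $\mu(w)$ can be written as a convex combination $\mu(w) = \sum_{\delta} \mu(\delta)\,\Lambda^\delta(w)$, where the sum is over admissible $\delta$ with $\mu(\delta) > 0$; this follows from the defining property of a Gibbs measure, $\mu(w \mid \delta) = \Lambda^\delta(w)$, together with the law of total probability, since $\partial B_{n+m_n}$ separates $K_n \subset B_n$ from the exterior. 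Because the coefficients $\mu(\delta)$ sum to $1$, the value $\mu(w)$ lies between the minimum and maximum of $\Lambda^\delta(w)$ over admissible $\delta$.

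First I would invoke Lemma~\ref{compute-lemma} to compute, in time $e^{O((n+m_n)^{d-1})}$, the full list of $B_{n+m_n}$-admissible configurations $\delta$ and the corresponding values $\Lambda^\delta(w)$ for every $w \in \A^{K_n}$. Then I would define the computable bounds
\begin{equation*}
\mu^-(w) = \min_{\delta \text{ admissible}} \Lambda^\delta(w), \qquad \mu^+(w) = \max_{\delta \text{ admissible}} \Lambda^\delta(w),
\end{equation*}
where the extrema range over all $B_{n+m_n}$-admissible $\delta$. Since the enumeration of admissible $\delta$ and the computation of each $\Lambda^\delta(w)$ is already done within the stated time budget, taking these extrema costs no more than $e^{O((n+m_n)^{d-1})}$ additional time. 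The convex-combination identity above immediately yields $\mu^-(w) \le \mu(w) \le \mu^+(w)$, so these are genuine bounds.

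The heart of the argument is the size estimate $\mu^+(w) - \mu^-(w) \le Ce^{-\alpha m_n}|K_n|$, and this is where I would apply the set version of SSM from Lemma~\ref{ssm-set}. For any two admissible boundary configurations $\delta, \eta \in \A^{\partial B_{n+m_n}}$, regard them as configurations $y, z$ on $T = \partial B_{n+m_n}$ with $V = B_{n+m_n}$ and $U = K_n$; then $\Lambda^\delta(w) = \Lambda^\delta(w \mid y)$ and likewise for $\eta$, and since $D(y,z) \subseteq \partial B_{n+m_n}$ while $K_n \subset B_n$, we have $d(K_n, D(y,z)) \ge m_n$. Lemma~\ref{ssm-set} then gives $|\Lambda^\delta(w) - \Lambda^\eta(w)| \le |K_n|\,Ce^{-\alpha m_n}$. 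Taking $\delta, \eta$ to be the maximizer and minimizer yields exactly $\mu^+(w) - \mu^-(w) \le C|K_n|e^{-\alpha m_n}$. The main obstacle is bookkeeping rather than conceptual: I must make sure the hypotheses of Lemma~\ref{ssm-set} are met for the specification on $V = B_{n+m_n}$ (in particular that $\Lambda^\delta(y), \Lambda^\eta(z) > 0$, which holds since $y, z$ are chosen consistent with $\delta, \eta$ and these are admissible), and I must verify that computing $\mu^\pm(w)$ without knowing the unavailable weights $\mu(\delta)$ still captures the true $\mu(w)$; the convex-combination bound resolves this precisely because SSM forces all the $\Lambda^\delta(w)$ to cluster within $C|K_n|e^{-\alpha m_n}$ of each other regardless of the weights.
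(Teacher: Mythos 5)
Your proposal is correct and follows essentially the same route as the paper: both write $\mu(w)$ as a convex combination of $\Lambda^\delta(w)$ over admissible boundary conditions, take $\mu^\pm(w)$ to be the max and min of $\Lambda^\delta(w)$ over $B_{n+m_n}$-admissible $\delta$ (computed via Lemma~\ref{compute-lemma}), and bound the gap by Lemma~\ref{ssm-set} with $V=B_{n+m_n}$, $T=\partial V$, $U=K_n$, using $d(K_n,D(y,z))\ge m_n$. No substantive differences.
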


\begin{proof}
Fix sequences $(K_n)$ and $(m_n)$, a particular value of $n$, and $w$ as in the statement of the theorem.
Observe that
$$
\mu(w) = \sum_{\delta \in \A^{\partial B_{m_n+n}} :~ \mu(\delta) >
0}~\mu(w \ | \ \delta) \mu(\delta).
$$
Let $\delta^w$ be a configuration $\delta$ which achieves $\max_{\{
B_{n+m_n}\mbox{-admissible}~~   \delta \}}~\Lambda^\delta(w)$ and let
$\delta_w$ be a configuration $\delta$ which achieves  $\min_{
\{B_{n+m_n}\mbox{-admissible} ~~\delta \} } ~\Lambda^\delta(w)$. Since
strict positivity of $\mu(\delta)$ implies $B_{n+m_n}$-admissibility,
it follows that
$$
\Lambda^{\delta_w}(w) \le \mu (w) \le \Lambda^{\delta^w}(w).
$$
Since $\mu$ satisfies SSM, it follows by Lemma~\ref{ssm-set}
(applied to $V=B_{n+m_n}, T=\partial V$ and $U=K_n$)  that
\begin{equation}
\label{diff} 0 \le \Lambda^{\delta^w}(w) - \Lambda^{\delta_w}(w) \le
Ce^{-\alpha m_n}|K_n|.
\end{equation}



By Lemma~\ref{compute-lemma}, we can identify all
$B_{m_n+n}$-admissible $\delta$ and compute $\Lambda^{\delta}(w) $ for
all such $\delta$ and all $w \in \A^{K_n}$ in time
$ e^{O((n +m_n)^{d-1})}$. Thus in time $e^{O((n +m_n)^{d-1})}$
we can identify, for all $w \in \A^{K_n}$, $\delta_w$, and $\delta^w$
and compute the upper and lower bounds $\Lambda^{\delta_w}(w)$ and
$\Lambda^{\delta^w}(w)$.

%
%
This, together with (\ref{diff}), completes the proof.
\end{proof}

Similarly, we have:
\begin{proposition}
\label{compute2} Let $\mu$ be a stationary $\mZ^d$ Gibbs measure
whose specification satisfies SSM
with constants $C$ and $\alpha$.
Let $(K_n)$, $n \in \mathbb{N}$,
be a sequence of sets satisfying $K_n \subset B_n \setminus \{0\}$ and $|K_n| = O(n^{d-1})$.
Then for any sequence $(m_n)$ of positive integers, there is an algorithm which, on input $n$, computes
upper and lower bounds $\mu^+(x_0 \ | \ w)$ and $\mu^-(x_0 \ | \ w)$ to $\mu(x_0 \ | \ w)$
for all $x_0 \in \A$ and $w \in \A^{K_n}$ with $\mu(w) > 0$ in time $e^{O((n+m_n)^{d-1})}$
such that
$$
\mu^+(x_0 \ | \ w) - \mu^-(x_0 \ | \ w) \le Ce^{-\alpha m_n}.
$$
\end{proposition}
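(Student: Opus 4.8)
The plan is to mirror the proof of Proposition~\ref{compute}, adapting it to the fact that we are now estimating a conditional probability $\mu(x_0 \mid w)$ rather than a probability $\mu(w)$. The key idea is to exhibit $\mu(x_0 \mid w)$ as a convex combination of the quantities $\Lambda^\delta(x_0 \mid w)$, which we can simultaneously bound (via SSM) and compute (via Lemma~\ref{compute-lemma}). Note that the hypothesis $K_n \subset B_n \setminus \{0\}$, rather than just $K_n \subset B_n$, is exactly what is needed to keep the origin disjoint from the conditioning set.

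First, fix $n$, $x_0 \in \A$, and $w \in \A^{K_n}$ with $\mu(w) > 0$, and set $V = B_{n+m_n}$. Writing each of $\mu(x_0 w)$ and $\mu(w)$ as a sum over $\delta \in \A^{\partial V}$ with $\mu(\delta) > 0$ and using the Gibbs property $\mu(\cdot \mid \delta) = \Lambda^\delta(\cdot)$ on $V$, I would obtain
$$
\mu(x_0 \mid w) = \frac{\sum_{\delta} \Lambda^\delta(x_0 w)\,\mu(\delta)}{\sum_{\delta} \Lambda^\delta(w)\,\mu(\delta)} = \frac{\sum_{\delta} \Lambda^\delta(x_0 \mid w)\,\Lambda^\delta(w)\,\mu(\delta)}{\sum_{\delta} \Lambda^\delta(w)\,\mu(\delta)},
$$
a convex combination of the values $\Lambda^\delta(x_0 \mid w)$ with nonnegative weights $\Lambda^\delta(w)\mu(\delta)$ (the denominator equals $\mu(w) > 0$, and terms with $\Lambda^\delta(w) = 0$ drop out of both numerator and denominator). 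Consequently, setting $\mu^+(x_0 \mid w)$ and $\mu^-(x_0 \mid w)$ equal to the maximum and minimum of $\Lambda^\delta(x_0 \mid w)$ over all $B_{n+m_n}$-admissible $\delta$ with $\Lambda^\delta(w) > 0$ — a set that contains every $\delta$ of positive weight, since $\mu(\delta) > 0$ forces $V$-admissibility — yields bounds that bracket $\mu(x_0 \mid w)$.

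Next I would bound the gap using SSM. For two admissible $\delta, \eta$ with $\Lambda^\delta(w), \Lambda^\eta(w) > 0$, apply the definition of SSM with $u = \b0$, $T = \partial V \cup K_n$, and the configurations $y, z \in \A^T$ that agree with $w$ on $K_n$ and equal $\delta, \eta$ respectively on $\partial V$. Since $y$ and $z$ agree on $K_n$, their disagreement set lies in $\partial V = \partial B_{n+m_n}$, so $d(\{\b0\}, D(y,z)) \ge n + m_n + 1 \ge m_n$, and SSM gives
$$
\big|\Lambda^\delta(x_0 \mid w) - \Lambda^\eta(x_0 \mid w)\big| \le Ce^{-\alpha m_n},
$$
with no factor of $|K_n|$ because the conditioned site $\b0$ is a singleton, so the plain definition of SSM applies directly rather than Lemma~\ref{ssm-set}. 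Taking the maximum over $\delta$ and minimum over $\eta$ gives $\mu^+(x_0 \mid w) - \mu^-(x_0 \mid w) \le Ce^{-\alpha m_n}$.

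Finally, for the running time, I would invoke Lemma~\ref{compute-lemma} with the set $K_n \cup \{\b0\}$, which still lies in $B_n$ and has size $O(n^{d-1})$. This identifies all $B_{n+m_n}$-admissible $\delta$ and computes $\Lambda^\delta(v)$ for every $v \in \A^{K_n \cup \{\b0\}}$ in time $e^{O((n+m_n)^{d-1})}$; from these each $\Lambda^\delta(x_0 \mid w) = \Lambda^\delta(x_0 w)/\Lambda^\delta(w)$ follows immediately, and the required maxima and minima over admissible $\delta$ with $\Lambda^\delta(w) > 0$ are taken within the same time bound. The step I expect to require the most care is the convex-combination representation — specifically, verifying that optimizing over the larger, \emph{computable} family of all admissible $\delta$ with $\Lambda^\delta(w) > 0$ (rather than only those with $\mu(\delta) > 0$, which we cannot detect) still brackets the true value $\mu(x_0 \mid w)$. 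Once that is in place, the SSM estimate and the complexity count are routine adaptations of the corresponding steps in Proposition~\ref{compute}.
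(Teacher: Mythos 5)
Your proposal is correct and takes essentially the same route as the paper: the same decomposition of $\mu(x_0 \mid w)$ as a convex combination of $\Lambda^\delta(x_0 \mid w)$ over boundary configurations $\delta$ on $\partial B_{n+m_n}$, the same application of SSM with $V = B_{n+m_n}$, $T = (\partial V) \cup K_n$, $U = \{\b0\}$ (whence no $|K_n|$ factor), and the same use of Lemma~\ref{compute-lemma} to compute $\Lambda^\delta(x_0 w)$ and $\Lambda^\delta(w)$. The only difference is that you make explicit the bracketing over the larger computable family of admissible $\delta$ with $\Lambda^\delta(w) > 0$, which the paper leaves implicit.
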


\begin{proof}
Fix sequences $(K_n)$ and $(m_n)$, a particular value of $n$, and $w$ as in the statement of the theorem.
Write
$$
\mu(x_0 \ | \ w) = \sum_{\delta \in \A^{\partial B_{m_n+n}}:
~\mu(w\delta)
> 0}~\mu(x_0 \ | \ w, \delta) \mu(\delta \ | \ w).
$$
As in the proof of Proposition~\ref{compute}, we can find
$B_{n+m_n}$-admissible $\delta^{x_0,w}$ and $\delta_{x_0,w}$ such that
$$
\Lambda^{\delta_{x_0,w}}(x_0 | w) \le \mu (x_0 \ | \ w) \le
\Lambda^{\delta^{x_0,w}}(x_0 | w) \textrm{ and}
$$
$$
0 \le \Lambda^{\delta^{x_0,w}}(x_0 | w) -
\Lambda^{\delta_{x_0,w}}(x_0 | w) \le
Ce^{-\alpha m_n}.
$$
(here, we apply SSM to $V = B_{n+m_n},~ T = (\partial V) \cup K_n,~ U
= \{\b0\}$). Then apply Lemma~\ref{compute-lemma} to compute these
bounds, i.e., compute $\Lambda^{\delta^{x_0,w}}(x_0w)$,
$\Lambda^{\delta^{x_0,w}}(w)$, $\Lambda^{\delta_{x_0,w}}(x_0w)$, and
$\Lambda^{\delta_{x_0,w}}(w)$.
\end{proof}

\begin{proof}[Proof of Theorem~\ref{approx2}]
\medskip

Let $(K_n)$, $n \in \mathbb{N}$,
be a sequence of sets satisfying $K_n \subset B_n$ and $|K_n| = O(n^{d-1})$.
We will first describe how to compute upper and lower bounds for arbitrary choice of $(m_n)$, and
then describe how to choose the proper values $(m_n)$ for our algorithm.

For any $n$ and $m_n$, let $\mu^+(w), \mu^-(w), \mu^+(x_0 | w), \mu^-(x_0 | w)$
be as in Propositions~\ref{compute} and~\ref{compute2}.

%

Let $f(x) = -x \log x$.  Let $\mu^{--}(x_0 \ | \ w)$ denote
whichever of $\mu^+(x_0 \ | \ w), \mu^-(x_0 \ | \ w)$ achieves
$\min\big(f(\mu^+(x_0 \ | \ w)), f(\mu^-(x_0 \ | \ w))\big)$.
%
Since $f$ is concave and H\"{o}lder continuous on $[0,1]$,
for some $C', \alpha' > 0$ (independent of $n$ and $m_n$) we have
\begin{equation}
\label{lower-approx} 0 \le f(\mu(x_0 \ | \ w)) - f(\mu^{--}(x_0 \ |
\ w)) \le C'e^{-\alpha' m_n}.
\end{equation}


Recall that
$$
H_\mu(\b0 \ | \ K_n) = \sum_{w \in \A^{K_n}} \mu(w) \sum_{x_0 \in
\A^\b0} f(\mu(x_0 \ | \ w)).
$$
Let $H^-_\mu(\b0 \ | \ K_n)$ denote the expression obtained by
substituting $\mu^{-}(w)$ for $\mu(w)$ and $\mu^{--}(x_0 \ | \ w)$
for $\mu(x_0 \ | \ w)$:
$$
H^-_\mu(\b0 \ | \ K_n) = \sum_{w \in \A^{K_n}} \mu^-(w) \sum_{x_0
\in \A^{\b0}} f(\mu^{--}(x_0 \ | \ w)).
$$
Then $H^-_\mu(\b0 \ | \ K_n) \le H_\mu(\b0 \ | \ K_n)$.

Now, we estimate the difference between $H_\mu(\b0 \ | \ K_n)$ and
$H^-_\mu(\b0 \ | \ K_n)$.
Using (\ref{lower-approx}), we see that
\begin{multline}\label{lowerbd}
H_\mu(\b0 \ | \ K_n) - H^-_\mu(\b0 \ | \ K_n) \\
= \sum_{w \in \A^{K_n}} \mu(w) \sum_{x_0 \in \A} f(\mu(x_0 \ | \ w)) - \sum_{w \in \A^{K_n}} \mu^-(w) \sum_{x_0 \in \A^{\b0}} f(\mu^{--}(x_0 \ | \ w)) \\
= \sum_{w \in \A^{K_n}} \mu(w) \sum_{x_0 \in \A^{\b0}} (f(\mu(x_0 \ | \ w)) - f(\mu^{--}(x_0 \ | \ w)))\\
 + \sum_{w \in \A^{K_n}} (\mu(w) - \mu^-(w)) \sum_{x_0 \in \A^{\b0}} f(\mu^{--}(x_0 \ | \ w))\\
\leq C' e^{-\alpha' m_n} + |\A|^{|K_n|} |K_n| e^{-1} |\A| C
e^{-\alpha m_n} \leq C' e^{-\alpha' m_n} + e^{\eta n^{d-1}} C
e^{-\alpha m_n}
\end{multline}
for some constant $\eta$ (depending on the growth rate of $|K_n|$). The reader can check that there then exists a constant $L$ so that for every $n$, if $m_n > L n^{d-1}$, then $H_\mu(\b0 \ | \ K_n) - H^-_\mu(\b0 \ | \ K_n) < 0.5e^{-n^{d-1}}$.


We also note that the computation time of $H^-_\mu(\b0 \ | \ K_n)$ is $e^{O((n+m_n)^{d-1})}$
(the total amount of time to compute $\mu^{-}(w)$ and
$f(\mu^{--}(x_0 \ | \ w))$ for all $w \in \A^{K_n}$ and $x_0 \in
\A^\b0$.)



For the upper bound, let $\mu^{++}(x_0 \ | \ w)$ be whichever of
$\mu^+(x_0 \ | \ w), \mu^-(x_0 \ | \ w)$ achieves
$\max\big(f(\mu^+(x_0 \ | \ w)), f(\mu^-(x_0 \ | \ w))\big)$ if 
$x,y \le 1/e$ or  $x,y \ge 1/e$, and $1/e$ otherwise. Using
H\"{o}lder continuity of $f$, as well as the fact that
$f(x)$ achieves its maximum at $x = 1/e$, we have:
\begin{equation}
\label{upper-approx} 0 \le f(\mu^{++}(x_0 \ | \ w)) - f(\mu(x_0 \ |
\ w)) \le C'e^{-\alpha' m_n}.
\end{equation}
Then
$$
H^+_\mu(\b0 \ | \ K_n) = \sum_{w \in \A^{K_n}} \mu^+(w) \sum_{x_0
\in \A^{\b0}} f(\mu^{++}(x_0 \ | \ w))
$$
is an upper bound for $H_\mu(\b0 \ | \ K_n)$.

Using (\ref{upper-approx}), we see that
\begin{multline}\label{upperbd}
H^+_\mu(\b0 \ | \ K_n) -  H_\mu(\b0 \ | \ K_n) \\
= \sum_{w \in \A^{K_n}} \mu^+(w) \sum_{x_0 \in \A^{\b0}}  f(\mu^{++}(x_0 \ | \ w)) - \sum_{w \in \A^{K_n}} \mu(w) \sum_{x_0 \in \A^{\b0}} f(\mu(x_0 \ | \ w))\\
= \sum_{w \in \A^{K_n}} \mu(w) \sum_{x_0 \in \A^{\b0}} (f(\mu^{++}(x_0 \ | \ w)) - f(\mu(x_0 \ | \ w))) \\
+ \sum_{w \in \A^{K_n}} (\mu^{+}(w) - \mu(w)) \sum_{x_0 \in \A^{\b0}} f(\mu^{++}(x_0 \ | \ w)) \\
\leq C' e^{-\alpha' m_n} + |\A|^{|K_n|} |K_n| e^{-1} |\A| C
e^{-\alpha m_n} \leq C' e^{-\alpha' m_n} + e^{\eta n^{d-1}} C
e^{-\alpha m_n}.
\end{multline}
For every $n$, if $m_n > Ln^{d-1}$ (the $L$ is the same as for the
lower bound), then $H^+_\mu(\b0 \ | \ K_n) -  H_\mu(\b0 \ | \ K_n) <
0.5e^{-n^{d-1}}$. The time to compute $H^+_\mu(\b0 \ | \ K_n)$ is
$e^{O((n + m_n)^{d-1})}$, the same as for $H^-_\mu(\b0 \ | \ K_n)$.



We now describe the algorithm for choosing the values $(m_n)$. We note that without knowledge of the explicit constants $C$ and $\alpha$ from the strong spatial mixing of $\mu$, we cannot explicitly compute the constant $L$. However, for our purposes, knowledge of $L$ is unnecessary.

The algorithm uses parameters $n$ and $j$ which both start off equal to $1$, though they will be incremented later. The algorithm consists of one main loop which is run repeatedly. At the beginning of the loop, the above bounds $H^-_\mu(\b0 \ | \ K_n)$ and $H^+_\mu(\b0 \ | \ K_n)$ are computed for $m_n = j n^{d-1}$. If the bounds are not within $e^{-n^{d-1}}$ of each other, then $j$ is incremented by $1$ and the algorithm returns to the beginning of the loop. When the bounds are within $e^{-n^{d-1}}$ of each other (which will happen for large enough $j$ by the comments following (\ref{lowerbd}) and (\ref{upperbd})), then $m_n$ is defined to be $jn^{d-1}$, the value of $n$ is incremented by $1$, and the algorithm returns to the beginning of the loop.

By the above discussion, there exists $L$ so that $j$ will never be incremented beyond $L$ in this algorithm. This means that there exists $J$ so that for all sufficiently large $n$, $m_n = J n^{d-1}$. Therefore, for all $n$, the algorithm yields upper and lower bounds to within tolerance $e^{-n^{d-1}}$ in time
$e^{O((n + Jn^{d-1})^{d-1})}$ = $e^{O(n^{(d-1)^2})}$.
 %
%

\end{proof}

\begin{remark}
{\rm We remark that the algorithms in Propositions~\ref{compute}
and~\ref{compute2} can be simplified if one uses knowledge of
specific values of the constants $C$ and $\alpha$ in the definition
of SSM. Namely, one can compute $\Lambda^\delta(w)$ (or
$\Lambda^\delta(x_0 \ | \ w)$) for any fixed $B_{n+m_n}$-admissible
configuration $\delta$ and then set the upper and lower bounds
$\mu^{\pm}(w)$ (or $\mu^{\pm}(x_0 \ | \ w)$) to be
$\Lambda^\delta(w) \pm Ce^{-\alpha m_n}|K_n|$ (or
$\Lambda^{\delta}(x_0 \ | \ w) \pm Ce^{-\alpha m_n}$).


In theory, we can also dispense with the auxiliary sequence $(m_n)$ in Proposition~\ref{compute2}:
we could instead bound $\mu(x_0 \ | \ w)$ by
the minimum and maximum possible values of $\mu(x_0 \ | \ w, \delta)$
for configurations $\delta$ on $\partial B_n$, which would give
approximations of tolerance $Ce^{-\alpha n}$ in time
$O(e^{n^{d-1}})$. A similar simplification could be done for Proposition~\ref{compute}
as well, but it would not be useful for our proof of Theorem~\ref{approx2}: note that in formula
(\ref{lowerbd}), the upper bound on $\mu(w) - \mu^-(w)$ is multiplied by $|A|^{|K_n|}$,
and so this upper bound must be at most $e^{-\Omega(n^{d-1})}$.
Therefore, the described simplification for Proposition~\ref{compute2}
would not reduce the overall order of computation time for Theorem~\ref{approx2}, since
the algorithm from Proposition~\ref{compute} would still require time $e^{O(n^{(d-1)^2})}$.

Finally, we note that in Proposition~\ref{compute2} when $K_n =
\partial S_{n-1}$, there is no need to bound the conditional probabilities $\mu(x_0 \ | \ w)$,
as they can be computed exactly (by using the methods of Lemma~\ref{compute-lemma}).

}
\end{remark}

We will now describe how to extend Theorem~\ref{approx2} and
Corollary~\ref{cor1} to give bounds for pressure in addition to
entropy. Given local interactions $\gamma, \beta_i$, define:
$$
X = \{x \in \A^{\mZ^d}: ~ \mbox{ for all } v
\in \mZ^d \mbox{ and } 1 \le i \le d, ~~\beta_i(x_v, x_{v + e_i}) > 0\}.
$$
$X$ is the set of configurations on $\mZ^d$ defined by
nearest-neighbor constraints and so belongs to the class of
nearest-neighbor (or 1-step) shifts of finite type~\cite{LM}.

Let $f:X \rightarrow \mR$ be
defined by
\begin{equation}
\label{defn_f}
f(x) = \log \gamma(x_0) + \sum_{i=1}^d ~ \log \beta_i(x_0, x_{e_i}).
\end{equation}

\begin{definition}
Let $X$ and $f$ be as above.  Define the {\bf pressure} of $f$ by
$$
P(f) = \max_\nu ~ h(\nu) + \int f d\nu,
$$
where the $\max$ is taken over all stationary measures $\nu$ with support contained in $X$.
A measure which achieves the $\max$  is called an {\bf equilibrium state} for $f$.
\end{definition}

Alternatively, pressure can be defined directly in terms of $X$
and $f$, without reference to stationary measures.
The definition of pressure which we have used is a corollary of the well-known
variational principle~\cite[Chapter 9]{walters}. For general dynamical systems, the $\max$ is
merely a $\sup$; however, in our context, the $\sup$ is always achieved.

It is well known that any equilibrium state for $f$ is a Gibbs measure for
the specification defined by the interactions $\gamma, \beta_i$~\cite[Chapter 4]{Ruelle}. As mentioned earlier,
when the specification satisfies SSM, there is only one Gibbs measure $\mu$ that satisfies
that specification, and so $\mu$ is an (unique) equilibrium state for $f$.

\begin{corollary}
\label{cor2} Let $\gamma, \beta_i$ be local interactions which define
a stationary $\mZ^d$ Gibbs
specification that satisfies SSM. Let $f$ be as in (\ref{defn_f}). Then there is an algorithm to compute
upper and lower bounds to $P(f)$ in time $e^{O(n^{(d-1)^2})}$ to
within tolerance $e^{-\Omega(n)}$.
\end{corollary}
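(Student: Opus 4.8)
The plan is to reduce the computation of $P(f)$ to the already-established machinery for entropy, exploiting the variational principle and the fact that the unique Gibbs measure $\mu$ is the equilibrium state. Since $P(f) = h(\mu) + \int f \, d\mu$, and Corollary~\ref{cor1} already gives an algorithm for computing upper and lower bounds to $h(\mu)$ to within tolerance $e^{-\Omega(n)}$ in time $e^{O(n^{(d-1)^2})}$, it suffices to show that $\int f \, d\mu$ can be approximated to within the same tolerance and in the same (or smaller) running time. Adding the two sets of bounds then yields bounds for $P(f)$ of the claimed quality.

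First I would write out $\int f \, d\mu$ explicitly using the definition of $f$ in (\ref{defn_f}). By stationarity of $\mu$,
$$
\int f \, d\mu = \sum_{a \in \A} \mu(x_0 = a)\log\gamma(a) + \sum_{i=1}^d \sum_{a,b \in \A} \mu(x_0 = a, x_{e_i} = b)\log\beta_i(a,b),
$$
where terms with $\beta_i(a,b) = 0$ occur with zero probability and are omitted (since $\mu$ is supported on $X$). This is a finite sum whose only unknowns are the single-site marginals $\mu(x_0 = a)$ and the nearest-neighbor pair marginals $\mu(x_0 = a, x_{e_i} = b)$. Each of these is a value $\mu(w)$ for a configuration $w$ on a set of at most two sites, all contained in $B_1 \subset B_n$.

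Next I would invoke Proposition~\ref{compute} with a suitable constant sequence $K_n$ (for instance $K_n = \{\b0\} \cup \{e_i : 1 \le i \le d\}$, so that $|K_n| = O(1) = O(n^{d-1})$) to obtain, for any sequence $(m_n)$, upper and lower bounds $\mu^+(w), \mu^-(w)$ with $\mu^+(w) - \mu^-(w) \le Ce^{-\alpha m_n}|K_n|$, computable in time $e^{O((n+m_n)^{d-1})}$. Choosing $m_n = \Omega(n)$ (e.g.\ $m_n = n$) keeps the running time within $e^{O(n^{d-1})}$ and drives the marginal errors to $e^{-\Omega(n)}$. Substituting these marginal bounds into the finite sum above, and using the signs of the fixed coefficients $\log\gamma(a)$ and $\log\beta_i(a,b)$ to decide whether to use the upper or lower marginal bound for each term, produces upper and lower bounds for $\int f\, d\mu$. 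Since the coefficients are fixed constants and there are $O(1)$ terms, the propagated error is $e^{-\Omega(n)}$, as required.

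The main obstacle is the integrability of $f$ when some $\beta_i$ vanishes: then $\log\beta_i$ is $-\infty$ on part of $X$, so one must confirm that the pairs $(a,b)$ with $\beta_i(a,b)=0$ genuinely carry zero $\mu$-measure. This follows because $\mu$ is a Gibbs measure supported on $X$, so $\mu(x_0 = a, x_{e_i} = b) = 0$ whenever $\beta_i(a,b) = 0$; hence those $-\infty$ contributions are multiplied by $0$ and correctly dropped. A secondary subtlety is that the computed bounds $\mu^\pm$ might not themselves vanish exactly on the forbidden pairs, but since admissibility (hence $\mu(w) = 0$) is detected by the finite procedure in Lemma~\ref{compute-lemma}, one can simply omit the corresponding terms, so no spurious infinite contribution arises. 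Everything else is a routine combination of the entropy algorithm from Corollary~\ref{cor1} with these marginal estimates, and the running time remains $e^{O(n^{(d-1)^2})}$, dominated by the entropy computation.
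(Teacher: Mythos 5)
Your proposal is correct and follows essentially the same route as the paper: decompose $P(f)=h(\mu)+\int f\,d\mu$ via the variational principle and uniqueness of the Gibbs measure under SSM, apply Corollary~\ref{cor1} to $h(\mu)$, and apply Proposition~\ref{compute} to the single-site and edge marginals appearing in $\int f\,d\mu$. The extra care you take with the sign of the coefficients and with pairs where $\beta_i(a,b)=0$ is a reasonable elaboration of details the paper leaves implicit, but it does not change the argument.
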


\begin{proof} Let $\mu$ be the unique Gibbs measure that satisfies the specification.
Then Corollary~\ref{cor1} applies to compute such bounds for $h(\mu)$.

It follows from Proposition~\ref{compute} that for any configuration
$w$ on a single site or edge of $\mZ^d$, one can compute upper and
lower bounds to $\mu(w)$ in time $e^{O(n^{d-1})}$ to within
tolerance $e^{-\Omega(n)}$ (in fact, this follows easily from
weak spatial mixing).
Thus one can compute upper and lower bounds to $\int f d\mu$ in the
same time with the same tolerance.

Finally, recall that $\mu$ is an equilibrium state since its specification satisfies SSM,
and so we can compute the desired bounds for $h(\mu) + \int f d\mu = P(f)$.

\end{proof}


There are a variety of conditions in the literature which guarantee
SSM of an MRF: for example, see
\cite{dob},~\cite{GKM},~\cite{GMP},~\cite{RSTVY},~\cite{vdbM}, and
\cite{Weitz}. We present the one from \cite{vdbM} here as one of the
most general and easy to state. Let $\Lambda$ be a stationary Gibbs
specification. Let
$$
q(\Lambda) = \max_{y,z \in \A^{\partial\b0}} d(\Lambda^y,
\Lambda^z),
$$
where $d$ denotes total variation distance of distributions on
$\A^{\b0}$. Let $p_c = p_c(\mZ^d)$ denote the critical probability
for site percolation in $\mZ^d$. (We will not define
$p_c(\mathbb{Z}^d)$ or discuss percolation theory here; for a good
introduction to the subject, see \cite{grimmett}.)

\begin{proposition}
\label{vdBM-bound}
If $q(\Lambda) < p_c$, then $\Lambda$ satisfies SSM.
\end{proposition}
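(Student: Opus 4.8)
The plan is to establish SSM by a disagreement-percolation coupling, following van den Berg and Maes~\cite{vdbM}. Fix $V$, $u \in V$, $T$ with $\partial V \subseteq T \subset V \cup \partial V$, $x \in \A^{\{u\}}$, and $y, z \in \A^{T}$ as in the definition of SSM for specifications; write $\delta = y|_{\partial V}$, $\eta = z|_{\partial V}$, and $D = D(y,z)$. The goal is to produce a single coupling of the conditional distributions $\Lambda^{\delta}(\cdot \mid y)$ and $\Lambda^{\eta}(\cdot \mid z)$ on $\A^{V}$ under which the event that the two samples disagree at $u$ is contained in the event that $u$ is joined to $D$ through a cluster of ``disagreement'' sites, and then to bound the latter probability by subcritical percolation estimates.

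First I would build the coupling by a site-by-site revealing scheme on the free sites $V \setminus T$. Maintaining a growing set of already-revealed sites, at each step I pick an unrevealed site $v$ adjacent to the current disagreement cluster of $D$ (revealing sites away from the cluster last, where they will be forced to agree), and sample $(\xi_v, \xi'_v)$ from the \emph{optimal} coupling of the two single-site conditionals $\Lambda^{\delta}(\cdot \mid \text{revealed data from } y)$ and $\Lambda^{\eta}(\cdot \mid \text{revealed data from } z)$ at $v$. The Gibbs/MRF property is the crux: each of these single-site conditionals depends only on the values at the neighbors of $v$, so if every revealed neighbor of $v$ carries the same symbol in the two samples, the two conditionals coincide and the optimal coupling forces $\xi_v = \xi'_v$. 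Hence a site can be in disagreement only if it lies in $D$ or is adjacent to a site already in disagreement, and whenever a site is ``eligible'' (adjacent to a disagreement), the conditional probability that it actually disagrees, given the past, is the total variation distance of two single-site conditionals, which is at most $q(\Lambda)$ by definition.

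With this in hand, a standard stochastic-domination argument shows that the disagreement cluster of $D$ under the coupling is dominated by the open cluster of $D$ in independent Bernoulli site percolation on $\mZ^{d}$ with parameter $q(\Lambda)$; in particular the disagreement event at $u$ is contained in $\{u \leftrightarrow D\}$. Since $q(\Lambda) < p_c$, we are in the subcritical regime, where the connectivity function decays exponentially: there are constants $C, \alpha > 0$, depending only on $q(\Lambda)$ and $d$, with $\mathbb{P}(u \leftrightarrow D) \le C e^{-\alpha d(\{u\}, D)}$ (sharpness of the phase transition). Because $\big|\Lambda^{\delta}(x \mid y) - \Lambda^{\eta}(x \mid z)\big| \le \mathbb{P}(\xi_u \ne \xi'_u)$, this yields exactly the SSM bound with these $C, \alpha$.

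I expect the main obstacle to be making the coupling and its domination by independent percolation rigorous: one must reveal sites in an order adapted to the growing disagreement cluster so that the per-site disagreement probability is bounded by $q(\Lambda)$ uniformly over the past, and then verify that the resulting history-dependent exploration process is dominated by genuinely independent Bernoulli percolation. The exponential decay of subcritical connectivity is then invoked as a cited black box rather than reproved.
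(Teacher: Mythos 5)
The paper gives no proof of this proposition beyond citing \cite[Theorem 1]{vdbM} (with further explanation in \cite[Theorem 3.10]{MP}), and your disagreement-percolation coupling is exactly the argument behind that citation, so your approach coincides with the paper's. The one step you state slightly too quickly is the per-site bound: the conditional law at a newly revealed site given the revealed data is a \emph{mixture} over the unrevealed neighbors' values of single-site kernels $\Lambda^{\tau}$, not a single such kernel, so you must add the one-line observation that the total variation distance between two such mixtures is at most the maximum distance over pairs of kernels, which is $q(\Lambda)$ by definition and stationarity.
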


This result is essentially contained in~\cite[Theorem 1]{vdbM};
see~\cite[Theorem 3.10]{MP} for more explanation.
\bigskip

\section*{Acknowledgements} The authors thank David Gamarnik for several
helpful discussions and Andrea Montanari for pointing out the
connection between our
work~\cite{MP},~\cite{pavlov} and
work of Gamarnik and Katz~\cite{GK}.

\bibliographystyle{plain}
\bibliography{MRF_MPv2}

\end{document}